\theoremstyle{plain}
\newtheorem{thm}{Theorem}
\newtheorem*{thm*}{Theorem}
\newtheorem*{prop*}{Proposition}
\newtheorem{lem}{Lemma}
\newtheorem{cor}{Corollary}
\newtheorem*{cor*}{Corollary}
\newtheorem{conj}{Conjecture}
\theoremstyle{remark}
\newtheorem{ex}{Example}
\newtheorem{rmk}{Remark}
\newcommand{\pr}{\mathfrak P} 
\DeclareMathOperator{\Var}{\mathscr V} 
\DeclareMathOperator{\I}{I} 
\DeclareMathOperator{\hgt}{ht} 
\newcommand{\Prj}{\mathbb P} 
\DeclareMathOperator{\cof}{cof}
\DeclareMathOperator{\Adj}{adj}
\DeclareMathOperator{\Spec}{Spec}
\DeclareMathOperator{\GL}{GL}
\DeclareMathOperator{\rank}{rank}
\newcommand{\transpose}{^{\rm T}} 
\newcommand{\vect}[1]{\mathbf{#1}}
\newcommand{\Z}{\mathbb Z}
\newcommand{\N}{\mathbb N}
\DeclareMathOperator{\Grass}{Grass}
\begin{document}
\subjclass{13C40, 14M12.}
\title{Ideals Generated by Principal Minors}
\author{Ashley K. Wheeler}
\address{Department of Mathematical Sciences \\
	University of Arkansas \\
	Fayetteville, AR 72701
	}
\email{ashleykw@uark.edu}
\date{\today}
\thanks{This work partially partially supported by NSF grant 0943832.}
\begin{abstract}
A minor is \emph{principal} means it is defined by the same row and column indices.  We study ideals generated by principal minors of size $t\leq n$ of a generic $n\times n$ matrix $X$, in the polynomial ring generated over an algebraically closed field by the entries of $X$.  When $t=2$ the resulting quotient ring is a normal complete intersection domain.  We show for any $t$, upon inverting $\det X$ the ideals given respectively by the size $t$ and the size $n-t$ principal minors become isomorphic.  From that we show the algebraic set given by the size $n-1$ principal minors has a codimension $4$ component defined by the determinantal ideal, plus a codimension $n$ component.  When $n=4$ the two components are linked, and in fact, geometrically linked. 
\end{abstract}
\maketitle


\section{Introduction}

Let $X=(x_{ij})$ denote a generic matrix, $K[X]$ the polynomial ring over an arbitrary algebraically closed field $K$, generated as a $K$-algebra by the entries $x_{ij}$.  For the past several decades algebraists have studied ideals defined using generic matrices -- there are the very well-studied determinantal ideals (see ~\cites{room, eagon_thesis, eagon+northcott, sharpe, hochster+eagon, svanes, hochster+huneke/94_2, harris+tu84}), due to their connection to invariant theory (as in \cite{deconcini+procesi76}); Pfaffian ideals (see ~\cites{kleppe, jozefiak+pragacz79, kleppe+laksov80, pragacz81, denegri+gorla11}), whose study is often inspired by the result from \cite{buchsbaum+eisenbud}; and various ideals related to the commuting variety (see ~\cites{gerstenhaber61, richardson79, baranovsky01, knutson, mueller_thesis, young_thesis}), to name only a few.  

Our focus is on the ideals generated by the principal minors (i.e., those whose defining row and column indices are the same) of an $n\times n$ generic matrix.  We use $\pr_t=\pr_t(X)\subseteq K[X]$ to denote the ideal generated by the size $t$ principal minors of $X$.  In developing their generalized version of the Principal Minor Theorem, Kodiyalam, Lam, and Swan (\cite{kodiyalam+lam+swan}) draw some relationships between the principal minor ideals and the Pfaffian ideals.  One contrast that stands out is that the Pfaffian ideals, like the determinantal ideals, satisfy a chain condition according to rank, whereas the principal minor ideals do not.  

Our motivation is in finding results about principal minor ideals comparable to what is known about the Pfaffian and determinantal ideals.  As the observations in \cite{kodiyalam+lam+swan}, and in \cite{ghorpade+krattenthaler04}, suggest, such results prove challenging.  For example, it turns out the ideals $\pr_t$ are in general not Cohen-Macualay (follows from Theorem \ref{thm:n-1}).  By Hochster and Roberts' famous result that rings of invariants of reductive groups acting on regular rings, a class in which quotients by determinantal and Pfaffian ideals belong, are Cohen-Macaulay, it follows that principal minor ideals do not arise as defining ideals for rings of invariants.

Nonetheless, principal minors have their own interesting contexts.  In the late nineteenth and early twentieth centuries algebraists began studying subsets of all the principal minors of a square matrix; see ~\cite{macmahon, stouffer24, stouffer28}, in which an established result is that $n^2-n+1$ of the $2^n-1$ principal minors of a generic $n\times n$ matrix are independent.  In \cite{griffin+tsatsomeros05}, Griffin and Tsatsomeros study the Principal Minor Assignment Problem (PMAP), the problem of finding an $n\times n$ matrix with prescribed principal minors.  Among other applications, PMAP ``is a natural algebraic problem with many potential applications akin to inverse eigenvalue and pole assignment problems that arise in engineering and other mathematical sciences."  Holtz and Sturmfels show the relations among the principal minors of a symmetric matrix satisfy the \emph{hyperdeterminantal relations} (see \cite{holtz+sturmfels07}), and comment on the connections to probability theory.  The set-theoretic equations for the variety of principal minors of symmetric $n\times n$ matrices are computed in \cite{oeding11_2}; the same year Oeding shows in \cite{oeding11_1} that restricting to matrices of rank at most one gives the tangential variety of the Segre variety.  

Again, in this work we focus on principal minors of a fixed size.  Ideals generated by subsets of 2-minors alone have applications in integer programming because they are binomial (see \cite{diaconis+eisenbud+sturmfels98}); it is known, as a direct consequence of the results in \cite{ene+qureshi}, that $\pr_2$ is a prime complete intersection, its corresponding quotient ring is normal, and its divisor class group is free of rank $2^n-n-\binom{n}{2}$.

\subsection{Results}

We collect and summarize the main results. 

\begin{thm*}[Theorems \ref{thm:p2X} and \ref{thm:2normal}, Corollary \ref{cor:f-reg}; see Section \ref{sec:p2X}] 
For all $n$, $\pr_2$ is prime, normal, a complete intersection, and toric.  Hence, $\pr_2$ is strongly $F$-regular and Gorenstein of codimension $\binom{n}{2}$. 
\end{thm*}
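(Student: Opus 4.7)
The plan is to derive all four properties by writing down the $\binom{n}{2}$ binomial generators
\[
  f_{ij} = x_{ii}x_{jj} - x_{ij}x_{ji}, \qquad 1 \le i < j \le n,
\]
and realizing $K[X]/\pr_2$ as an affine semigroup ring through an explicit monomial parametrization. The complete intersection property comes from a Gr\"obner basis argument: in a lexicographic order placing every off-diagonal variable above every diagonal variable, the initial term of $f_{ij}$ is the squarefree monomial $x_{ij}x_{ji}$. For distinct unordered pairs $\{i,j\}\neq\{k,l\}$ these initial terms involve disjoint variables, so all $\binom{n}{2}$ initial terms are pairwise coprime. By Buchberger's criterion $\{f_{ij}\}$ is then a Gr\"obner basis, and by the classical fact that a polynomial sequence with pairwise coprime initial terms is a regular sequence, $\pr_2$ is a complete intersection of codimension $\binom{n}{2}$.

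For primality and toricity I would build the monomial map
\[
  \phi : K[X] \to K[t_1^{\pm 1}, \ldots, t_n^{\pm 1}, u_{ij}^{\pm 1} : i<j]
\]
by $\phi(x_{ii}) = t_i$, $\phi(x_{ij}) = u_{ij}$ for $i<j$, and $\phi(x_{ji}) = t_i t_j u_{ij}^{-1}$ for $i<j$. Each $f_{ij}$ lies in $\ker\phi$, so $\phi$ factors through $\bar\phi : K[X]/\pr_2 \to K[t^{\pm},u^{\pm}]$. The Gr\"obner computation supplies a $K$-basis of $K[X]/\pr_2$ by monomials in which no pair $x_{ij},x_{ji}$ is jointly present; a direct exponent comparison shows $\phi$ sends distinct such normal-form monomials to distinct Laurent monomials (the $u_{ij}$-exponent of the image equals $b_{ij}-c_{ij}$, and the normal-form constraint $b_{ij}c_{ij}=0$ then recovers $b_{ij}$ and $c_{ij}$ individually; the diagonal exponents are determined next). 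Injectivity of $\bar\phi$ identifies $K[X]/\pr_2$ with the semigroup algebra $K[S]$ on the affine monoid $S\subset\mathbb Z^{n+\binom{n}{2}}$ generated by the exponent vectors of $t_i$, $u_{ij}$, and $t_it_ju_{ij}^{-1}$; in particular $\pr_2$ is prime and toric.

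For normality I would check that $S$ is saturated. Writing $(\alpha,\beta)$ for the coordinates dual to $(t,u)$, a direct computation identifies the rational cone spanned by $S$ as the locus of $\alpha_i \ge \sum_{l \ne i}\max(0,-\beta_{il})$; for any integer point satisfying these inequalities, setting $c_{ij} = \max(0,-\beta_{ij})$, $b_{ij} = \beta_{ij}+c_{ij}$, and $a_i = \alpha_i - \sum_{l\ne i} c_{il}$ exhibits a non-negative integer combination of the generators. Hence $K[S]$ is normal; alternatively this is the Ene--Qureshi result cited in the introduction. The listed consequences are then formal: a complete intersection is automatically Gorenstein of codimension equal to the number of defining relations, and a normal affine semigroup ring is strongly $F$-regular in positive characteristic, giving Corollary~\ref{cor:f-reg}.

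The main obstacle I anticipate is the saturation step for normality: primality and the complete-intersection property fall out nearly mechanically from the Gr\"obner computation and the normal-form injectivity check, but confirming normality without appealing to \cite{ene+qureshi} requires establishing the explicit cone description above and verifying that every lattice point in the cone lies in $S$.
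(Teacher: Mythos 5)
Your proposal is correct, and it takes a genuinely different route from the paper on every one of the three hard points. For the complete intersection property, the paper inducts on $n$: it appends a row and column to a size $n-1$ matrix, reduces modulo $z$, and verifies by a minimal-prime count that the residues $x_iy_i$ form a regular sequence in a Cohen--Macaulay ring; your Gr\"obner argument (pairwise coprime leading terms $x_{ij}x_{ji}$ give a Gr\"obner basis by Buchberger's first criterion, hence $\operatorname{in}(\pr_2)$ and therefore $\pr_2$ has height $\binom{n}{2}$) gets the same conclusion without induction and without invoking Cohen--Macaulayness. For primality and toricity, the paper shows the variables are nonzerodivisors and then inverts all entries below the diagonal to rewrite the quotient as a subring of a Laurent ring generated by monomials; your explicit map $\phi$ with the normal-form injectivity check is more self-contained and makes the affine semigroup $S$ explicit. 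For normality, the paper uses Serre's criterion $(R_1)+(S_2)$, exhibiting a height-$2$ ideal inside the singular locus by a dimension count; you instead verify saturation of $S$ directly, writing the cone as $\{\alpha_i\ge\sum_{l\neq i}\max(0,-\beta_{il})\}$ and producing an explicit nonnegative integer preimage for each lattice point. Your saturation argument is the one thing the paper sidesteps entirely (it never computes the cone), and both are sound; the paper's Serre-criterion route has the advantage of requiring nothing combinatorial, while yours has the advantage of making the toric structure explicit enough to read off normality directly, and of being modular (each conclusion follows from a small, self-contained computation rather than an induction whose hypotheses thread through the whole proof).
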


The strategies in proving Theorems \ref{thm:p2X} and \ref{thm:2normal} heavily exploit the fact that $\pr_2$ is toric (see Chapter 4 of \cite{sturmfels/96} for information about toric ideals), that is, the quotient $K[X]/\pr_2$ is isomorphic to a ring generated by monomials.  Our proof of these results is independent of those given in \cite{ene+qureshi}.  We use Cohen-Macaulayness of a complete intersection to prove primality of $\pr_2$.  

The $t>2$ cases require a different approach.  We work based on the observation that the irreducible components of $\Var(\pr_t)$ for fixed $t$ may be partitioned according to the rank of a generic element.  Let $\mathscr Y_{n,r,t}$ denote the locally closed set of rank $r$ matrices in $\Var(\pr_t)$.  As $r$ varies, the components of the sets $\mathscr Y_{n,r,t}$ cover $\Var(\pr_t)$, hence, so do their closures.  Our approach to studying the schemes $\Var(\pr_t)$ is to find those closures, and then omit the ones not maximal in the family, in order to get the components of $\Var(\pr_t)$.

\begin{thm*}[\ref{thm:invBij}; see Section \ref{sec:rEqn}] 
In the localized ring $K[X][\frac{1}{\det X}]$, the $K$-algebra automorphism $X\mapsto X^{-1}$ induces an isomorphism of schemes $\mathscr Y_{n,n,t}\cong\mathscr Y_{n,n,n-t}$. 
\end{thm*}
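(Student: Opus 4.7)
The plan is to realize the substitution $\iota\colon X \mapsto X^{-1}$ as a $K$-algebra involution of $K[X][1/\det X]$, and to check that it swaps the extended ideals $\pr_t$ and $\pr_{n-t}$. The algebraic input is the classical Jacobi identity relating complementary minors of $X$ and $X^{-1}$, which in the principal case simplifies because the usual sign $(-1)^{\sum I + \sum J}$ is trivial when $I=J$.

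First I would verify that $\iota$ is a well-defined $K$-algebra automorphism: the assignment $x_{ij} \mapsto (X^{-1})_{ij}$ is a ring map $K[X] \to K[X][1/\det X]$, it extends to the localization since $\iota(\det X) = 1/\det X$ is a unit, and it is an involution because applying it twice returns each $x_{ij}$ to itself. Next, for $I \subseteq \{1,\ldots,n\}$ of size $t$, writing $M_I(X) = \det(X[I,I])$ for the corresponding principal minor, Jacobi's identity gives
$$\iota\bigl(M_I(X)\bigr) \;=\; \det\bigl((X^{-1})[I,I]\bigr) \;=\; \frac{M_{I^c}(X)}{\det X},$$
where $I^c$ is the complement of $I$ in $\{1,\ldots,n\}$. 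Since $\det X$ is a unit in the localization, this identity shows that
$$\iota\bigl(\pr_t \cdot K[X][1/\det X]\bigr) \;=\; \pr_{n-t} \cdot K[X][1/\det X].$$

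Because $\iota$ is an automorphism carrying one extended ideal onto the other, it descends to an isomorphism of the corresponding closed subschemes of $\GL_n = \Spec K[X][1/\det X]$. The closed subscheme cut out by $\pr_t$ in $\GL_n$ is exactly $\mathscr Y_{n,n,t}$, namely the invertible matrices on which every size $t$ principal minor vanishes, and similarly for $n-t$, yielding the desired isomorphism $\mathscr Y_{n,n,t}\cong\mathscr Y_{n,n,n-t}$. The main technical point is the principal case of Jacobi's identity, together with the bookkeeping that $M_I$ and $M_{I^c}$ are indeed the principal minors of $X$ indexed by complementary subsets; once this is in hand, the scheme-level statement follows formally from $\iota$ being an involutive automorphism of the localization.
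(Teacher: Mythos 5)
Your proof is correct and follows essentially the same route as the paper: realize $X \mapsto X^{-1}$ as an involutive $K$-algebra automorphism of $K[X][1/\det X]$, apply the classical complementary-minor identity (which the paper quotes as Muir's theorem, the same fact you call Jacobi's identity) to see that a size $t$ principal minor is sent to $\Delta^{-1}$ times the complementary size $n-t$ principal minor, and conclude the extended ideals $\pr_t$ and $\pr_{n-t}$ are exchanged, hence the subschemes are isomorphic. The observation that the sign is trivial in the principal case is a useful bit of explicitness the paper leaves implicit.
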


\begin{thm*}[\ref{thm:n-1}; see Section \ref{sec:n-1}]  
For $n\geq 4$, $\Var(\pr_{n-1})$ has two components.  One is defined by the determinantal ideal $\I_{n-1}$.  The other is the Zariski closure of the locally closed set $\mathscr Y_{n,n,n-1}$, and has codimension $n$.
\end{thm*}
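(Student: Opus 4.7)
The plan has three stages: identify the two claimed components, verify they are distinct, and exclude all further components via Krull's height theorem combined with a dimension count for $\mathscr{Y}_{n,n-1,n-1}$.

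First, since every size $(n-1)$ principal minor is an $(n-1)\times (n-1)$ minor, $\pr_{n-1} \subseteq \I_{n-1}$, and therefore $\Var(\I_{n-1}) \subseteq \Var(\pr_{n-1})$; the classical theory of determinantal varieties shows $\Var(\I_{n-1})$ is irreducible of codimension $4$. For the second candidate, Theorem \ref{thm:invBij} identifies $\mathscr{Y}_{n,n,n-1} \cong \mathscr{Y}_{n,n,1}$ as schemes on the locus $\det X \neq 0$, and $\mathscr{Y}_{n,n,1}$ is the open subscheme of $\Var(\pr_1) = \Var(x_{11}, \ldots, x_{nn})$ where $\det X \neq 0$. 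Since $\Var(\pr_1)$ is a codimension $n$ linear subspace that contains invertible matrices (e.g., any cyclic permutation matrix), $\mathscr{Y}_{n,n,1}$ is a dense irreducible open of codimension $n$, and so is $\mathscr{Y}_{n,n,n-1}$; hence $\overline{\mathscr{Y}_{n,n,n-1}}$ is irreducible of codimension $n$ in $K[X]$.

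These two irreducible closed subsets are distinct and neither contains the other: $\overline{\mathscr{Y}_{n,n,n-1}}$ contains invertible matrices, while every point of $\Var(\I_{n-1})$ has rank at most $n-2$. Consequently both are irreducible components. To show no others exist, let $C$ be any irreducible component of $\Var(\pr_{n-1})$; since $\pr_{n-1}$ is generated by $n$ elements, Krull's height theorem gives $\dim C \geq n^2 - n$. If $C$ meets the open locus $\det X \neq 0$, then $C \cap \mathscr{Y}_{n,n,n-1}$ is a dense open subset of $C$ (of dimension $\dim C$) and a closed subset of the irreducible $\mathscr{Y}_{n,n,n-1}$; since $\dim C \geq n^2 - n = \dim \mathscr{Y}_{n,n,n-1}$, this intersection must equal $\mathscr{Y}_{n,n,n-1}$, forcing $C = \overline{\mathscr{Y}_{n,n,n-1}}$. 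Otherwise $C \subseteq \Var(\det X) \cap \Var(\pr_{n-1}) \subseteq \Var(\I_{n-1}) \cup \overline{\mathscr{Y}_{n,n-1,n-1}}$, and irreducibility of $C$ forces $C \subseteq \Var(\I_{n-1})$ (so $C = \Var(\I_{n-1})$ by maximality) or $C \subseteq \overline{\mathscr{Y}_{n,n-1,n-1}}$.

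The main obstacle is ruling out this last case via the dimension estimate $\dim \overline{\mathscr{Y}_{n,n-1,n-1}} = n^2 - n - 1$, which contradicts $\dim C \geq n^2 - n$. I would parameterize $A \in \mathscr{Y}_{n,n-1,n-1}$ by the pair $([\alpha],[\beta])$ arising from the rank-one factorization $\Adj(A) = \alpha\beta\transpose$, where $\alpha$ generates $\ker A$ and $\beta$ generates $\ker A\transpose$. The condition $\Adj(A)_{ii} = 0$ becomes $\alpha_i\beta_i = 0$ for all $i$, cutting out in $\Prj^{n-1}\times\Prj^{n-1}$ the union over nonempty proper subsets $S \subseteq \{1, \ldots, n\}$ of pieces $\Prj^{|S|-1} \times \Prj^{n-|S|-1}$, each of dimension $n-2$. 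The fiber of $\mathscr{Y}_{n,n-1,n-1}$ over each such $([\alpha],[\beta])$ is the variety of rank-$(n-1)$ linear maps with kernel $\langle\alpha\rangle$ and image the hyperplane annihilated by $\beta$; this is an open subvariety of the $(n-1)^2$-dimensional space of linear maps between two fixed $(n-1)$-dimensional vector spaces. Summing, $\dim \mathscr{Y}_{n,n-1,n-1} = (n-2) + (n-1)^2 = n^2 - n - 1$, completing the proof.
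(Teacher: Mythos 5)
Your proof is essentially correct and follows the same overall strategy as the paper: stratify $\Var(\pr_{n-1})$ by rank, use Krull's height theorem to get the lower bound $\dim C \geq n^2-n$ for any component $C$, and kill the rank-$(n-1)$ stratum by the dimension count $\dim \mathscr Y_{n,n-1,n-1} = n^2 - n - 1$. The one genuinely different ingredient is how you obtain that count. The paper restricts to the open set where a fixed off-diagonal $(n-1)$-minor $B(\underbar i;\underbar j)$ is nonzero and factors $B$ as a product of two rank-one-bordered identity blocks around $B(\underbar i;\underbar j)$, counting $(n-1)^2$ parameters plus $n-2$ surviving border parameters. You instead observe $\Adj(A)=\alpha\beta\transpose$ with $\alpha$ and $\beta$ spanning the kernel and cokernel, translate the vanishing of principal $(n-1)$-minors into $\alpha_i\beta_i=0$, and compute the base in $\Prj^{n-1}\times\Prj^{n-1}$ as an $(n-2)$-dimensional union of product pieces fibered by $(n-1)^2$-dimensional spaces of isomorphisms $K^n/\langle\alpha\rangle\to\beta^\perp$. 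Both give $(n-1)^2+(n-2)=n^2-n-1$; your version is coordinate-free and arguably cleaner, and it also explains conceptually why the two kinds of extra parameters show up. Your treatment of components meeting $D_\Delta$ is also tidier than the paper's argument that $\mathfrak Q_{n-1}$ is a minimal prime.

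One small gap: you assert ``neither contains the other'' but your justification (that $\overline{\mathscr Y_{n,n,n-1}}$ contains invertible matrices while points of $\Var(\I_{n-1})$ have rank $\leq n-2$) only rules out $\overline{\mathscr Y_{n,n,n-1}}\subseteq\Var(\I_{n-1})$. To rule out $\Var(\I_{n-1})\subseteq\overline{\mathscr Y_{n,n,n-1}}$ you need one more line: for $n>4$ this is a dimension count ($n^2-4>n^2-n$), and for $n=4$ the two sets are equidimensional irreducible varieties which you have already shown are distinct, so neither can be properly contained in the other. This is needed when you conclude $\Var(\I_{n-1})$ is actually a component (the component containing $\Var(\I_{n-1})$ could a priori meet $D_\Delta$ and hence equal $\overline{\mathscr Y_{n,n,n-1}}$, which must be excluded). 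The fix is immediate, but it should be stated.
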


\begin{cor*}[\ref{cor:n-1}; see Section \ref{sec:n-1}] 
For $n\neq 3$, $\hgt(\pr_t)\leq\binom{n+1}{2}-\binom{t+2}{2}+4$. 
\end{cor*}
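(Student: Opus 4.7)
The plan is to construct an explicit closed subvariety $W \subseteq \Var(\pr_t)$ in $K^{n^2}$ whose codimension equals $\binom{n+1}{2} - \binom{t+2}{2} + 4$; the bound then follows because $\hgt(\pr_t) \leq \operatorname{codim}_{K^{n^2}} W$ for any closed $W$ sitting inside $\Var(\pr_t)$.

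To define $W$, split $\{1,\dots,n\} = \{1,\dots,t+1\} \sqcup \{t+2,\dots,n\}$ and write each matrix in block form
\[
X = \begin{pmatrix} X' & U \\ Y & Z \end{pmatrix}
\]
with $X'$ the upper-left $(t+1)\times(t+1)$ block. Let $W$ be the locus where $U = 0$, $Z$ is strictly lower triangular, $Y$ is arbitrary, and $\rank X' \leq t-1$ (i.e.\ $X' \in \Var(\I_t(X'))$).

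To see that $W \subseteq \Var(\pr_t)$, fix any $S \subseteq \{1,\dots,n\}$ with $|S| = t$ and split $S = S_1 \sqcup S_2$ with $S_1 \subseteq \{1,\dots,t+1\}$ and $S_2 \subseteq \{t+2,\dots,n\}$. The vanishing of $U$ makes the principal submatrix $X_S$ block lower triangular, so
\[
\det X_S = (\det X'_{S_1,S_1})(\det Z_{S_2,S_2}).
\]
If $S_2 \neq \emptyset$, then $Z_{S_2,S_2}$ is a principal submatrix of a strictly lower triangular matrix, hence itself strictly lower triangular and singular; if $S_2 = \emptyset$, then $\det X'_{S_1,S_1}$ is a $t \times t$ minor of $X'$ and vanishes since $\rank X' \leq t-1$.

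For the codimension: the conditions ``$U = 0$'' and ``$Z$ strictly lower triangular'' together force $x_{ij} = 0$ for exactly those pairs $(i,j)$ with $i \leq j$ and $j \geq t+2$, a total of $\sum_{j=t+2}^n j = \binom{n+1}{2} - \binom{t+2}{2}$ independent linear conditions on variables disjoint from the entries of $X'$. The further condition $\rank X' \leq t-1$ contributes codimension $((t+1)-t+1)^2 = 4$ inside $K^{(t+1)^2}$ by the classical height formula for determinantal ideals. Summing, $\operatorname{codim}_{K^{n^2}} W = \binom{n+1}{2} - \binom{t+2}{2} + 4$, as required. The main obstacle is really just spotting the right block pattern; once it is in hand, both the containment and the codimension count are routine. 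The hypothesis $n \neq 3$ serves to sidestep degenerate cases (most visibly $t = n$, where the right-hand side collapses below $1$).
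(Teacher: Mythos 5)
Your proposal is correct and essentially mirrors the paper's own proof: both kill the same set of $\binom{n+1}{2}-\binom{t+2}{2}$ variables (yours in the ``upper'' pattern $\{x_{ij} : i\leq j,\ j\geq t+2\}$, the paper's in the transposed ``lower'' pattern; the two are interchangeable since $\pr_t$ is invariant under transposition) so as to isolate the upper-left $(t+1)\times(t+1)$ block, and then impose a rank condition on that block to kill the remaining principal $t$-minors. The one substantive difference is in how the ``$+4$'' is obtained. You invoke the classical codimension formula $\hgt\I_t = ((t+1)-t+1)^2 = 4$ for the determinantal ideal of the $(t+1)\times(t+1)$ block directly; the paper instead cites Theorem~\ref{thm:n-1}, applied to that block, to say that $\I_t$ is a minimal prime of $\pr_t$ of height~$4$. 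Your route is slightly cleaner and more robust: it does not require Theorem~\ref{thm:n-1} to apply to the block, so it makes no demand that $t+1\geq 4$, and in fact it covers $n=3$, $t\leq 2$ as well---showing that the exclusion $n\neq 3$ in the statement is an artifact of the paper's reliance on Theorem~\ref{thm:n-1} rather than a genuine obstruction. One small correction to your closing remark: the hypothesis $n\neq 3$ does not serve to sidestep $t=n$. Both proofs implicitly require $t\leq n-1$ for the $(t+1)\times(t+1)$ block to exist (and the bound plainly fails at $t=n$ for every $n\geq 3$), so the real implicit assumption is $t\leq n-1$, independent of whether $n=3$.
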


We conjecture $\pr_{n-1}$ is reduced and we prove it for $n=4$.  It follows that $\I_3=\I_3(X_{4\times 4})$ and $\mathfrak Q_3=\mathfrak Q_3(X_{4\times 4})$, the defining ideal for the Zariski closed set $\overline{\mathscr Y}_{4,4,3}$, are algebraically linked.  In Section \ref{sec:ci} we discuss these consequences.  

\begin{thm*}[Theorem \ref{thm:red}, Corollaries \ref{cor:QCM}, \ref{cor:fQ}, \ref{cor:definingQ}; see Section \ref{sec:ci}] 
For $n=4$, $\pr_3$ is reduced.  Consequently, $\I_3$ and $\mathfrak Q_3$ are algebraically linked and hence $\mathfrak Q_3$ is Cohen-Macaulay with $5$ generators.  Furthermore, $\mathfrak Q_3=\pr_3:_{K[X]}\Delta$, where $\Delta=\det X$. 
\end{thm*}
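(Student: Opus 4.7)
By Theorem~\ref{thm:n-1}, $\Var(\pr_3)$ has exactly two irreducible components, $\Var(\I_3)$ and $\overline{\mathscr Y}_{4,4,3}$, both of codimension $4$. Since $\pr_3$ is generated by four elements in the regular ring $K[X]$ and its height equals this number, $\pr_3$ is a complete intersection; in particular it is Cohen--Macaulay and unmixed, so its only associated primes are the minimal primes $\I_3$ and $\mathfrak Q_3$, and no embedded primes occur. To prove reducedness of $\pr_3$ it then suffices to check that the primary component at each of these two minimal primes equals the prime itself.

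The component at $\mathfrak Q_3$ is handled using Theorem~\ref{thm:invBij}. The adjugate identity gives $p_i=\Delta\cdot[X^{-1}]_{ii}$, so in the localization $K[X][\Delta^{-1}]$ the ideal $\pr_3$ is generated by the diagonal entries of $X^{-1}$; under the $K$-algebra automorphism $X\mapsto X^{-1}$ this corresponds to $(x_{11},x_{22},x_{33},x_{44})$, the linear (hence reduced) ideal defining $\mathscr Y_{4,4,1}$. Thus $\pr_3\,K[X][\Delta^{-1}]=\mathfrak Q_3\,K[X][\Delta^{-1}]$, and since $\Delta\notin\mathfrak Q_3$ (because $\overline{\mathscr Y}_{4,4,3}$ meets the invertible locus), further localization at $\mathfrak Q_3$ yields $(\pr_3)_{\mathfrak Q_3}=(\mathfrak Q_3)_{\mathfrak Q_3}$.

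The main obstacle is the $\I_3$ component, for which the plan is to invoke the Jacobian criterion. At any rank-$2$ matrix $P$ the row $\partial p_i/\partial X$ of the $4\times 16$ Jacobian is supported on the nine variables $x_{jk}$ with $j,k\ne i$ and is, up to transpose, the adjugate of the $3\times 3$ submatrix $P_{\hat i,\hat i}$, a rank-$1$ matrix in $K^{3\times 3}$. The task is to choose $P=UV^{T}$ with $U,V$ sufficiently generic $4\times 2$ matrices so that these four rank-$1$ patterns, each sitting in a distinct nine-variable coordinate block, together span a four-dimensional subspace of $K^{16}$; this is routine but nontrivial because the blocks overlap pairwise in four coordinates. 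Once a single concrete $P$ produces a nonvanishing $4\times 4$ minor of the Jacobian, $\pr_3$ is generically reduced along $\Var(\I_3)$, and combined with the absence of embedded primes this forces $\pr_3=\I_3\cap\mathfrak Q_3$, i.e., $\pr_3$ is reduced.

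Everything else is then a formal consequence. The equality $\pr_3=\I_3\cap\mathfrak Q_3$ for a codimension-$4$ complete intersection with $\I_3,\mathfrak Q_3$ sharing no minimal component is precisely the Peskine--Szpiro algebraic linkage hypothesis, giving $\mathfrak Q_3=\pr_3:_{K[X]}\I_3$ and transferring Cohen--Macaulayness from $\I_3$ (known from determinantal theory) to $\mathfrak Q_3$; the five minimal generators of $\mathfrak Q_3$ come from the mapping-cone resolution built from the Koszul complex on $\pr_3$ and the minimal free resolution of $K[X]/\I_3$. Finally, since $\Delta\in\I_3$ gives $\I_3:\Delta=K[X]$, and $\Delta\notin\mathfrak Q_3$ with $\mathfrak Q_3$ prime gives $\mathfrak Q_3:\Delta=\mathfrak Q_3$, the decomposition yields
\[
\pr_3:\Delta\;=\;(\I_3\cap\mathfrak Q_3):\Delta\;=\;(\I_3:\Delta)\cap(\mathfrak Q_3:\Delta)\;=\;\mathfrak Q_3.
\]
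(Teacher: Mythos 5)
Your overall architecture is sound and largely parallels the paper: establish unmixedness from the complete intersection property (four generators, height four from Theorem~\ref{thm:n-1}), handle each of the two minimal primes separately, then derive the linkage corollaries. Your localization argument at $\mathfrak Q_3$ via Theorem~\ref{thm:invBij} is correct, and your colon computation
\[
\pr_3:\Delta=(\I_3\cap\mathfrak Q_3):\Delta=(\I_3:\Delta)\cap(\mathfrak Q_3:\Delta)=K[X]\cap\mathfrak Q_3=\mathfrak Q_3
\]
is actually cleaner than the paper's, which first identifies the extra generator $f$ of $\mathfrak Q_3$ and then verifies $f\Delta\in\pr_3$ by direct computation.

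However, there is a genuine gap at the crux of the reducedness proof. You correctly reduce the $\I_3$ component to a Jacobian-rank computation: you need a rank-$2$ matrix $P$ (lying outside $\Var(\mathfrak Q_3)$) at which the $4\times 16$ Jacobian of the principal $3$-minors has rank $4$. You describe the structure of the Jacobian rows accurately --- each is supported on a nine-variable block and is, up to sign and transposition, the rank-$\le 1$ adjugate of a $3\times 3$ submatrix --- and you say finding such a $P$ is ``routine but nontrivial.'' But you never exhibit one, and nothing short of a concrete $P$ (or an explicit nonvanishing $4\times 4$ minor of the Jacobian) closes the argument; if that rank dropped at every rank-$2$ point, $\pr_3$ would fail to be reduced precisely at $\I_3$, so the claim cannot be waved through. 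The paper's proof supplies exactly this missing computational content by a different route: it inverts two $2$-minors $\delta_1,\delta_2$ (shown not to lie in $\I_3$ or $\mathfrak Q_3$, the latter via Lemma~\ref{lem:Qminors}), eliminates $x_{33},x_{44},x_{43}$ to make $\pr_3$ principal in the localization, and factors the resulting generator over $\Z$ in \emph{Macaulay2} to see it is square-free modulo $\pr_3$. To finish your version you would need to produce a specific $P=UV\transpose$ and check a $4\times 4$ Jacobian minor is nonzero (ideally over $\Z$, so the conclusion holds in every characteristic, paralleling the paper's care on this point). Until then the proof is a plan, not a proof.

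One smaller remark: your justification that $\mathfrak Q_3$ has exactly five minimal generators via a mapping-cone resolution is workable but heavier than necessary; the paper's route through $\omega_{K[X]/\I_3}\cong\mathfrak Q_3/\pr_3$ being cyclic (since $\I_3$ is Gorenstein) gives $\mathfrak Q_3=\pr_3+(f)$ immediately, and then multigradedness pins down minimality.
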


\subsection{Acknowledgements}

The author would like to thank Viviana Ene and Luke Oeding for pointing out their results on principal minors, and Lance Miller, for the observation that in general, since a quotient by a principal minor ideal is not Cohen-Macaulay, it cannot be a ring of invariants.

\section{Principal 2-Minors Case}\label{sec:p2X}

For all $n$, we immediately see $K[X]/\pr_1$ is isomorphic to a polynomial ring in $n^2-n$ variables over $K$, since the generators for $\pr_1$ are just the diagonal entries of $X$.  For the $n=t=2$ case, $X=X_{2\times 2}$, we recognize $K[X]/\pr_2$ as the homogeneous coordinate ring of the image of $\Prj^1\times\Prj^1\hookrightarrow\Prj^3$ under the Segre embedding.

The next case we consider is when $t=2$ and $n$ is fixed.  We first show $R=K[X]/\pr_2$ is a complete intersection and $\pr_2$ is toric, hence prime.  We then show $R$ is normal.  Hochster showed in \cite{hochster/72} (1972) that quotients of toric ideals are direct summands of polynomial rings, a fact that, together with normality, implies when $K$ is characteristic $p>0$, $K[X]/\pr_2$ is $F$-regular (\cite{hochster+huneke/90}).  Furthermore, since $R$ is Gorenstein all notions for $R$ of $F$-regularity, strong $F$-regularity, and weak $F$-regularity are equivalent (\cite{hochster+huneke/94_2}).   

\begin{thm}\label{thm:p2X} 
For all $n$, $K[X]/\pr_2$ is a complete intersection domain. 
\end{thm}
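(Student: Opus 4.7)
The plan is to attack the theorem in two pieces: establish the complete-intersection property via a Gröbner basis computation, and then conclude primality by embedding the quotient into a Laurent polynomial ring through a toric map.

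First I would set up a monomial order on $K[X]$ that selects $x_{ij}x_{ji}$ as the leading term of $g_{ij}:=x_{ii}x_{jj}-x_{ij}x_{ji}$ (for instance the lex order with the diagonal variables smallest). For distinct unordered pairs $\{i,j\}\neq\{k,l\}$, the four off-diagonal entries $x_{ij},x_{ji},x_{kl},x_{lk}$ are pairwise distinct, so the leading monomials of $g_{ij}$ and $g_{kl}$ are coprime. By Buchberger's first criterion the $g_{ij}$ then form a Gröbner basis of $\pr_2$ with
$$\operatorname{in}(\pr_2)=J:=\langle x_{ij}x_{ji}:1\le i<j\le n\rangle.$$
The squarefree generators of $J$ have pairwise disjoint supports and hence form a regular sequence of length $\binom{n}{2}$, so $\hgt J=\binom{n}{2}$. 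Since Gröbner degeneration preserves the Hilbert function, and hence the height, $\hgt\pr_2=\binom{n}{2}$, matching the number of generators; as $K[X]$ is Cohen-Macaulay this makes $\pr_2$ a complete intersection.

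Next I would prove primality using the explicit $K$-basis of $K[X]/\pr_2$ supplied by the Gröbner basis, namely the standard monomials $\prod x_{ab}^{e_{ab}}$ with $\min(e_{ij},e_{ji})=0$ for each $i<j$. Introduce the Laurent polynomial ring $T=K[y_1,\ldots,y_n,z_{ij}^{\pm 1}]_{1\le i<j\le n}$, and define $\phi:K[X]\to T$ by $\phi(x_{ii})=y_i^2$ and, for $i<j$, $\phi(x_{ij})=y_iy_jz_{ij}$ and $\phi(x_{ji})=y_iy_jz_{ij}^{-1}$. A direct check gives $\phi(g_{ij})=0$, so $\phi$ descends to $\bar\phi:K[X]/\pr_2\to T$. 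On a standard monomial $m$ the exponent of $z_{ij}$ in $\phi(m)$ is the signed difference $e_{ij}-e_{ji}$, and the standard-form condition $\min(e_{ij},e_{ji})=0$ recovers $e_{ij}$ and $e_{ji}$ individually; the $y_i$-exponents then determine $e_{ii}$. Hence $\bar\phi$ sends distinct standard basis monomials to distinct Laurent monomials, so $\bar\phi$ is $K$-linearly injective. Since $T$ is a domain, so is $K[X]/\pr_2$.

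The hard part, technically rather than conceptually, is the injectivity of $\bar\phi$ on standard monomials: restricting to the normal form $\min(e_{ij},e_{ji})=0$ is precisely what allows the $z_{ij}$-exponent, a single integer, to pin down both $e_{ij}$ and $e_{ji}$. Without this normal form the map $\phi$ would be far from injective on monomials, so the interaction between the Gröbner normal form and the toric parametrization is really the engine that drives the proof.
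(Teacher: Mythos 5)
Your proof is correct, but it takes a genuinely different route from the paper. For the complete intersection part, the paper works by induction on $n$: it writes $\pr_2(X)=\pr_2(X')K[X]+(zu_{ii}-x_iy_i)$, passes to $\tilde A/(z)$, and counts minimal primes of $(x_iy_i)$ to see the generators form a regular sequence. You instead observe that with an order making the off-diagonal variables large, the $g_{ij}=x_{ii}x_{jj}-x_{ij}x_{ji}$ have pairwise coprime leading terms $x_{ij}x_{ji}$, hence are a Gr\"obner basis by Buchberger's first criterion, so $\operatorname{in}(\pr_2)$ is generated by a regular sequence of disjoint-support monomials; flat degeneration preserves the Hilbert function and hence $\hgt\pr_2=\binom{n}{2}$, forcing a complete intersection. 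This avoids induction entirely and is arguably cleaner. For primality the paper also establishes a monomial (semigroup) structure, but does so by first proving the $x_i,y_i$ are nonzerodivisors (using Cohen--Macaulayness and a parameter argument), then inverting all sub-diagonal entries and solving for the rest. You instead write down a concrete toric map $\phi:K[X]\to K[y_1,\dots,y_n,z_{ij}^{\pm1}]$ with $x_{ii}\mapsto y_i^2$, $x_{ij}\mapsto y_iy_jz_{ij}$, $x_{ji}\mapsto y_iy_jz_{ij}^{-1}$ ($i<j$), verify it kills the $g_{ij}$, and use the standard-monomial $K$-basis supplied by the same Gr\"obner basis to show the induced map is injective (the normal-form condition $\min(e_{ij},e_{ji})=0$ lets the $z_{ij}$-exponent recover both $e_{ij}$ and $e_{ji}$, and the $y_i$-exponents then recover $e_{ii}$). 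Both arguments exploit that $\pr_2$ is toric, but your version is more self-contained and makes the Gr\"obner basis do double duty (dimension count and basis of standard monomials), whereas the paper's proof of primality does not reuse the complete-intersection machinery beyond Cohen--Macaulayness.
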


\begin{proof} 
The $n=2$ case gives the homogeneous coordinate ring for $\Prj^1\times\Prj^1$, which is a complete intersection domain.  We proceed by induction on $n$: let $X'=(u_{ij})_{1\leq i,j\leq n-1}$ denote a size $n-1$ matrix of indeterminates and suppose $\pr_2(X')$ satisfies the theorem.  Append to the bottom of $X'$ the row $(x_1 \cdots x_{n-1})$, then to the far right the column $(y_1,\dots,y_{n-1},z)$.  Let $X$ denote the resulting size $n$ matrix.  The ideal generated by the principal 2-minors is
\[
\pr_2(X)=\pr_2(X')K[X]+(zu_{ii}-x_iy_i \mid i=1,\dots,n-1).
\]
Put $A=K[X']/\pr_2(X')$, and
\[
R=\frac{A[x_1,\dots,x_{n-1},z,y_1,\dots,y_{n-1}]}{(zu_{ii}-x_iy_i)}\cong\frac{K[X]}{\pr_2(X)}.
\]

First, we show $R$ is a complete intersection.  By the induction hypothesis, $A$ is a complete intersection domain, hence so is the polynomial ring 
\[
\tilde A=A[x_1,\dots,x_{n-1},z,y_1,\dots,y_{n-1}].
\]  
In particular, $\tilde A$ is Cohen-Macaulay.  It follows that if $zu_{ii}-x_iy_i$, for $i=1,\dots,n-1$, form a regular sequence on $\tilde A$, then $R$ is also a complete intersection.  We shall show $x_iy_i$ form a regular sequence in $\tilde A/(z)$.  To see why, note all polynomials we consider are homogeneous, so $zu_{ii}-x_iy_i$ form a regular sequence if and only if 
\[
zu_{11}-x_1y_1,\dots,zu_{n-1,n-1}-x_{n-1}y_{n-1},z
\] 
form a regular sequence, if and only if 
\[
z,zu_{11}-x_1y_1,\dots,zu_{n-1,n-1}-x_{n-1}y_{n-1}
\] 
form a regular sequence.  The strategy works because clearly $z$ is not a zerodivisor in the domain $\tilde A$.  Working now in $\tilde A/(z)$, there are $2^{n-1}$ minimal primes for the ideal $I=(x_iy_i\mid i=1,\dots,n-1)$, each generated by picking one variable from each pair $\{x_i,y_i\}$.  Therefore $I$ has (pure) height $n-1$.  Height and depth of an ideal are equal in a Cohen-Macaulay ring, so the generators for $I$ must form a regular sequence, as desired.

We now show $R$ is a domain, by showing it is isomorphic to a \emph{semigroup ring} (see Chapter 7 of \cite{miller+sturmfels}).  We first claim the variables $x_i, y_i$, for $i=1,\dots,n-1$, are not zerodivisors on $R$. Since $R$ is Cohen-Macaulay it suffices to show each is a homogeneous parameter.  Fix $i$ and suppose we kill a minimal prime, $P$, of $(x_i)$.  The minor $zu_{ii}-x_iy_i$ is in $(x_i)\subset P$, so $P$ must also contain either $u_{ii}$ or $z$.  If $z\in P$ then we already know the dimension of $R$ drops upon killing $P$, since $z$ is a parameter.  On the other hand, suppose $u_{ii}\in P$.  Then the relations $u_{ii}u_{jj}-u_{ij}u_{ji}=0$ imply, for each $j\neq i$, either $u_{ij}$ or $u_{ji}$ is in $P$.  By the induction hypothesis none of these variables are zerodivisors, so again, the dimension of $R$  drops upon killing $P$ and we are done.

Having shown $x_i,y_i$ are not zerodivisors on $R$, we next observe $R$ injects into its localization at any subset of the variables $\{x_i, y_i \mid i=1,\dots,n-1\}$.  Fixing $i$, if we invert either $x_i$ or $y_i$ then we can use the principal 2-minor relations to solve for the other.  The same arguments held for the smaller matrix $X'$, so we may invert, say, all of the entries below the diagonal of $X$, then solve for the ones above the diagonal.  The resulting $K$-algebra, isomorphic to $R$, is generated by the $\binom{n+1}{2}$ indeterminates on or below the diagonal of $X$, along with monomials of the form $u_{ii}u_{jj}u_{ji}^{-1}$ for $i<j$, and monomials $u_{ii}zx_i^{-1},u_{ii}zy_i^{-1}$ for each $i$. Therefore $R$ is a semigroup ring. 
\end{proof}

\begin{thm}\label{thm:2normal} 
For all $n$, $K[X]/\pr_2$ is normal. 
\end{thm}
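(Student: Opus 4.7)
The plan is to invoke Serre's criterion: since Theorem~\ref{thm:p2X} establishes that $R := K[X]/\pr_2$ is a complete intersection domain, it is Cohen-Macaulay and therefore satisfies $S_2$ automatically, so it suffices to verify $R_1$---that the singular locus of $\Spec R$ has codimension at least $2$.

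I would apply the Jacobian criterion to the generators $f_{ij} = x_{ii}x_{jj} - x_{ij}x_{ji}$, $i<j$. The crucial structural observation is that the Jacobian is extremely sparse: for each ordered pair $(k,\ell)$ with $k \neq \ell$, the column corresponding to the variable $x_{k\ell}$ has a unique nonzero entry $-x_{\ell k}$ located in row $\{k,\ell\}$. Consequently, at any point $p \in \Var(\pr_2)$ for which at least one of $x_{ij}(p), x_{ji}(p)$ is nonzero for every pair $i<j$, one can select $\binom{n}{2}$ off-diagonal columns producing nonzero pivots in distinct rows of the Jacobian, giving maximal rank. Hence the singular locus is contained in $\bigcup_{i<j} \Var\bigl(\pr_2 + (x_{ij}, x_{ji})\bigr)$.

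Next I would bound the dimension of each $\Var\bigl(\pr_2 + (x_{ij}, x_{ji})\bigr)$. The relation $x_{ii}x_{jj} = x_{ij}x_{ji}$ forces $x_{ii}x_{jj} = 0$ on this locus, so it decomposes as $\Var\bigl(\pr_2 + (x_{ii}, x_{ij}, x_{ji})\bigr) \cup \Var\bigl(\pr_2 + (x_{jj}, x_{ij}, x_{ji})\bigr)$. I would analyze $W := \Var\bigl(\pr_2 + (x_{ii}, x_{ij}, x_{ji})\bigr)$; the other piece is symmetric. Letting $X'$ denote the $(n-1) \times (n-1)$ submatrix obtained by deleting row and column $i$, the generators of $\pr_2$ split into (a) the $\binom{n-1}{2}$ relations $\pr_2(X')$ not involving row or column~$i$, and (b) the $n-1$ relations $f_{ik} = x_{ii}x_{kk} - x_{ik}x_{ki}$ for $k \neq i$, which collapse to $x_{ik}x_{ki} = 0$ once $x_{ii}=0$. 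Using Theorem~\ref{thm:p2X}, the $X'$ block contributes $\dim \Var(\pr_2(X')) = \binom{n}{2}$ to $\dim W$, while the row and column $i$ entries contribute at most $n-2$ additional dimensions ($x_{ii}, x_{ij}, x_{ji}$ are all forced to zero, and each of the $n-2$ pairs $(x_{ik}, x_{ki})$ with $k \notin \{i,j\}$ admits one free direction). Thus $\dim W \leq \binom{n}{2} + (n-2) = \binom{n+1}{2} - 2$, giving codimension $2$ in $\Var(\pr_2)$.

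The main technical point is justifying the decoupling above: after imposing $x_{ii}=0$, no hidden relation among the generators of $\pr_2$ further reduces the dimension. This decoupling is transparent, since each generator $f_{k\ell}$ either involves only entries of $X'$ or lies in the ideal generated by $x_{ii}$ together with the products $x_{ik}x_{ki}$. Once every component of $\Var\bigl(\pr_2 + (x_{ij}, x_{ji})\bigr)$ has codimension at least $2$ in $\Var(\pr_2)$, taking the union over all $i<j$ preserves that codimension, so $R_1$ holds and Serre's criterion yields normality of $R$.
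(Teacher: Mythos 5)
Your argument is correct and follows essentially the same strategy as the paper: verify $R_1$ via Serre's criterion (Cohen--Macaulayness from Theorem~\ref{thm:p2X} gives $S_2$ for free), show the singular locus is contained in $\bigcup_{i<j}\Var(x_{ij},x_{ji})$, and bound the dimension of each piece by $\binom{n+1}{2}-2$. The only real difference is in establishing that containment: you compute directly that the Jacobian of the generators has full rank $\binom{n}{2}$ whenever one variable from each pair $\{x_{ij},x_{ji}\}$ is nonzero (exploiting the single-nonzero-entry structure of the off-diagonal columns), whereas the paper observes that inverting the corresponding monomial $\mu$ turns $R_\mu$ into a localized polynomial ring, since the binomial relations then let one solve for the complementary variables. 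These are two valid ways to see the same smoothness, and the subsequent dimension count---setting $x_{ii}=0$ by the relation $x_{ii}x_{jj}=x_{ij}x_{ji}$, peeling off the $\binom{n-1}{2}$-codimensional complete intersection $\pr_2(X')$ in the remaining block, and noting the $n-2$ pairs $(x_{ik},x_{ki})$ with $x_{ik}x_{ki}=0$ contribute at most one dimension each---matches the paper exactly, just tallied forward (building up $\dim W$) rather than backward (counting the drop from $n^2$).
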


\begin{proof} 
Let $J$ denote the defining ideal of the singular locus for $R=K[X]/\pr_2$.  Serre's condition says if $J$ has depth at least 2, then $R$ is normal.  Since $R$ is Cohen-Macaulay (Theorem \ref{thm:p2X}) it is enough to show $J$ has height at least 2.  Let $J'$ denote the ideal generated by the degree $n$ monomials whose factors consist of exactly one variable from each pair $\{x_{ij},x_{ji}\}, 1\leq i\neq j\leq n$.  Then $J'\subseteq J$, since for all such monomials $\mu$, $R[\frac{1}{\mu}$] is a localized polynomial ring.  We will show each of the minimal primes of $J'$ contains some height 2 ideal $(x_{ij},x_{ji})$. 

Let $P$ be a minimal prime of $J'$.  If for each pair $\{x_{ij},x_{ji}\}$ we can choose one element not in $P$, multiply these choices together to get an element $u\in J'\setminus P$, a contradiction.  Therefore there exists some pair $\{x_{ij},x_{ji}\}\subset P$, $i\neq j$.  Now suppose we kill that pair.  The principal 2-minor relation ($x_{ii}x_{jj}-x_{ij}x_{ji}$) implies either $x_{ii}$ or $x_{jj}$ must also vanish; without loss of generality, say $x_{ii}$.  Then, for each $k\neq i,j$, the relation $x_{ii}x_{kk}-x_{ik}x_{ki}=0$ implies either $x_{ik}$ or $x_{ki}$ must vanish.  

We now count the drop in the dimension from $\dim K[X]=n^2$, upon killing $\pr_2$ and $(x_{ij},x_{ji})\subset P$.  Killing $x_{ij},x_{ji},x_{ii}$ as above, along with the $n-2$ variables $x_{ik}$ or $x_{ki}$ for each $k\neq i,j$ drops the dimension by at least $3+(n-2)=n+1$.  The binomials $x_{ii}x_{kk}-x_{ik}x_{ki}$ now vanish automatically.  The remaining binomials defining $\pr_2$ are exactly those not involving variables with $i$ in the index.  There are $\binom{n-1}{2}$ such, and since $\pr_2$ is a complete intersection the dimension goes down to 
\[
n^2-(n+1)-\binom{n-1}{2}=\binom{n+1}{2}-2=\dim R-2,
\] 
as desired. 
\end{proof}

\begin{cor}\label{cor:f-reg} 
For all $n$, $K[X]/\pr_2$ is strongly $F$-regular, and hence, $F$-regular.  
\end{cor}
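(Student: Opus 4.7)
The plan is to assemble the pieces already established: Theorem \ref{thm:p2X} shows $R=K[X]/\pr_2$ is simultaneously a complete intersection and a semigroup ring, while Theorem \ref{thm:2normal} shows $R$ is normal. These are exactly the hypotheses needed to invoke Hochster's theorem from \cite{hochster/72}, which says that a normal semigroup ring is a direct summand (as a module over itself) of a polynomial ring over $K$. Direct summands of regular rings in characteristic $p>0$ are weakly $F$-regular by the results of Hochster and Huneke in \cite{hochster+huneke/90}, so $R$ inherits weak $F$-regularity from the ambient polynomial ring.

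To upgrade weak $F$-regularity to strong $F$-regularity, I would appeal to the Gorenstein hypothesis: since $\pr_2$ is a complete intersection in $K[X]$, the quotient $R$ is Gorenstein. By \cite{hochster+huneke/94_2}, for Gorenstein rings the notions of weak $F$-regularity, $F$-regularity, and strong $F$-regularity all coincide. Therefore $R$ is strongly $F$-regular, and hence $F$-regular.

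There is no substantive obstacle remaining: the two nontrivial ingredients (normality and the semigroup/complete intersection structure) have been secured by Theorems \ref{thm:p2X} and \ref{thm:2normal}, and the final step is a direct citation. The only thing to be careful about is the characteristic: Hochster--Huneke $F$-regularity is inherently a characteristic $p>0$ notion, so the statement should be read as asserting strong $F$-regularity whenever $\mathrm{char}\,K=p>0$; in characteristic zero the conclusion instead takes the form of $F$-regular type (equivalently, of rational singularities), which follows from reduction mod $p$ applied to the semigroup presentation.
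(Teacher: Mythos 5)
Your proposal is correct and takes essentially the same route as the paper: Hochster's result that a normal semigroup ring is a direct summand of a polynomial ring, the Hochster--Huneke theorem that direct summands of regular rings are $F$-regular, and the Gorenstein equivalence of the various $F$-regularity notions from \cite{hochster+huneke/94_2}. The extra care you take about the characteristic-zero reading is a sensible clarification that the paper leaves implicit.
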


\begin{proof} 
Since a normal ring generated by monomials is a direct summand of a regular ring, this follows from \cite{hochster/72} and \cite{hochster+huneke/90}. 
\end{proof}

\section{Using Matrix Rank to Find Minimal Primes of Principal Minor Ideals}

Describing $\pr_2$ relied on the fact that its generators, the principal 2-minors, are binomials.  Of course, once $t>2$, the generators for $\pr_t$ are not binomial and another strategy is required.  As it turns out, for any $t$, the components of $\Var(\pr_t)$ are stratified according to the rank, $r$, of a generic element.  For fixed $n,r,t$ with $1\leq t,r\leq n$, let $\mathscr Y_{n,r,t}\subset\Var(\pr_t)$ denote the locally closed subset of matrices with rank exactly $r$.  We study the components of $\mathscr Y_{n,r,t}$ and take their Zariski closures, $\overline{\mathscr Y}_{n,r,t}$, in $\Var(\pr_t)$.  The components of $\Var(\pr_t)$ will be among those closures as we vary $r$, since they are irreducible closed sets whose union is $\Var(\pr_t)$.  The issue is which ones are maximal.  In this section we shall analyze the sets $\mathscr Y_{n,n,t}$ and then apply the results to $\Var(\pr_{n-1})$.

\subsection{Rank $r=n$}\label{sec:rEqn}

We have a convenient way to describe components of $\mathscr Y_{n,n,t}$, which follows from a classical theorem stated and proved in Sir Thomas Muir's 1882 text, \emph{A Treatise on the Theory of Determinants}.  We first introduce some notation.  For any $n\times n$ matrix $A$, suppose $\underbar i,\underbar j\subset\{1,\dots,n\}$ are indexing sets of cardinality $t$:
\[
\begin{split}
\underbar i&=\{i_1,\dots,i_t\} \\
	\underbar j&=\{j_1,\dots,j_t\}
\end{split}
\]
We use $A(\underbar i;\underbar j)$ to denote the submatrix in $A$ indexed by the rows $\underbar i$ and columns $\underbar j$, and 
\[
A_{\underbar i,\underbar j}=(-1)^{\sigma}\det\left[A(\{1,\dots,n\}\setminus\underbar i;\{1,\dots,n\}\setminus\underbar j)\right],
\]
where $\sigma=i_1+\cdots+i_t+j_1+\cdots+j_t$, to denote the $(\underbar i,\underbar j)$th cofactor of $A$.  The \emph{cofactor matrix} of $A$, denoted $\cof(A)$, is the matrix whose $(i,j)$th entry is the cofactor $A_{\{i\},\{j\}}$.  We shall often abuse notation and write $A_{ij}=A_{\{i\},\{j\}}$.  Recall, the \emph{classical adjoint} of $A$, denoted $\Adj(A)$, is the transpose of the cofactor matrix. 

\begin{thm*}[\cite{muir}, \S 96] 
Let $A$ be an $n\times n$ matrix.  Suppose $\mu$ is a size $t$ minor of $\Adj A$, indexed by the rows $\underbar i$ and columns $\underbar j$ of $\Adj A$.  Then
\begin{equation}\label{eq:muir}
\mu=(\det A)^{t-1}\cdot A_{\underbar i,\underbar j}.
\end{equation}
\end{thm*}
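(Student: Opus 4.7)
The plan is to reduce to the special case $\underbar{i} = \underbar{j} = \{1, \ldots, t\}$ by row and column permutations, and to handle that reduced case via an auxiliary block-triangular matrix construction built on the fundamental identity $A \cdot \Adj(A) = (\det A)\, I$. Since~\eqref{eq:muir} is a polynomial identity in the entries of $A$, I may first assume $\det A \neq 0$ and recover the general case by Zariski density.

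For the reduced case, partition $A$ into blocks with bottom-right $(n-t)\times(n-t)$ block $A_4$, and likewise partition
\[
\Adj(A) = \begin{pmatrix} B_1 & B_2 \\ B_3 & B_4 \end{pmatrix}
\]
with $B_1$ of size $t \times t$, so that $\mu = \det(B_1)$. Introduce the auxiliary matrix
\[
N = \begin{pmatrix} B_1 & 0 \\ B_3 & I_{n-t} \end{pmatrix}.
\]
Then $N$ is block lower-triangular with $\det(N) = \mu$. Using $A\cdot \Adj(A) = (\det A)\, I$, the first $t$ columns of $AN$ coincide with the first $t$ columns of $(\det A)\, I$, while the last $n-t$ columns of $AN$ are just the last $n-t$ columns of $A$; hence
\[
AN = \begin{pmatrix} (\det A)\, I_t & A_2 \\ 0 & A_4 \end{pmatrix}.
\]
Computing $\det(AN)$ by multiplicativity gives $\det(A) \cdot \mu$, while the block-triangular form of $AN$ gives $(\det A)^t \det(A_4)$. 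Equating yields $\mu = (\det A)^{t-1} \det(A_4)$, matching the claim since $(-1)^\sigma = 1$ in this configuration.

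To handle general $\underbar{i}, \underbar{j}$, I would choose permutation matrices $P, Q$ that bring the rows of $\underbar{i}$ and the columns of $\underbar{j}$ of $\Adj(A)$ to the first $t$ positions, apply the reduced case to the matrix obtained from $A$ by these permutations, and then unwind using the identity $\Adj(PAQ) = \det(PQ)\, Q^{-1}\Adj(A) P^{-1}$. Under this unwinding, the top-left $t \times t$ minor of the transformed adjugate equals $\mu$ up to sign, while the bottom-right $(n-t)\times(n-t)$ block of the transformed $A$ is the complementary submatrix $A(\{1,\dots,n\}\setminus\underbar{i};\{1,\dots,n\}\setminus\underbar{j})$ up to sign. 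The main technical obstacle is sign-bookkeeping: the parity of a permutation placing an ordered $t$-subset $\underbar{i} \subset \{1,\ldots,n\}$ into the initial $t$ positions (with the complement sliding into the remaining slots in order) is $(-1)^{\sum\underbar{i} - t(t+1)/2}$, and combining the two permutations $P$ and $Q$ produces a net sign of $(-1)^{\sum\underbar{i} + \sum\underbar{j}} = (-1)^\sigma$, exactly matching the sign convention in $A_{\underbar{i},\underbar{j}}$.
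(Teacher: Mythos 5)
The paper does not supply a proof of this theorem; it is quoted from Muir's 1882 \emph{Treatise} (\S 96), so there is no internal argument to compare against. Your block-triangular computation for the reduced case $\underbar i = \underbar j = \{1,\dots,t\}$ is the standard self-contained proof of Jacobi's theorem on minors of the adjugate and is correct: $AN$ has the stated block form, multiplicativity of the determinant gives $(\det A)\mu = (\det A)^t\det(A_4)$, and the division by $\det A$ is justified by Zariski density (equivalently, by taking the entries of $A$ to be indeterminates, so $\det A$ is a nonzerodivisor). Since the paper invokes the theorem only for principal minors, where $\underbar i = \underbar j$ and the reduction is by simultaneous row-and-column permutation $X\mapsto\tau X\tau\transpose$, your argument in fact covers everything the paper actually uses.

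Two cautions on the general-case reduction, which is presently a sketch. First, the sign claim is asserted rather than carried out: your inversion count $\sum\underbar i - t(t+1)/2$ for a single block permutation is right, but the full bookkeeping has to combine the $\det(PQ)^t$ scalar appearing when you take a $t\times t$ minor of $\det(PQ)\,Q^{-1}\Adj(A)P^{-1}$, the parities of $P$ and $Q$, and the factor $(\det(PAQ))^{t-1}=(\det P\det Q)^{t-1}(\det A)^{t-1}$ from the reduced-case identity applied to $PAQ$, and one should verify explicitly that the net result is $(-1)^\sigma$. Second, because $\Adj$ is the \emph{transpose} of the cofactor matrix, a careful trace through $\Adj(PAQ)=\det(PQ)\,Q^{-1}\Adj(A)P^{-1}$ shows the bottom-right block of the permuted $A$ is $A(\{1,\dots,n\}\setminus\underbar j;\{1,\dots,n\}\setminus\underbar i)$, with $\underbar i$ and $\underbar j$ interchanged relative to what you write; a check at $n=2$, $t=1$, $\underbar i=\{1\}$, $\underbar j=\{2\}$ confirms the swap. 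You inherited this from the paper's own statement of the theorem, and it is invisible when $\underbar i=\underbar j$, so no harm is done here, but it should be corrected if the general identity is the goal.
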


Let $S=K[X]$ and $\Delta=\det X$, and write $S_{\Delta}=S[\frac{1}{\Delta}]$.  

\begin{thm}\label{thm:invBij} 
In the localized ring $S_{\Delta}$, the $K$-algebra automorphism $X\mapsto X^{-1}$ induces an isomorphism $\mathscr Y_{n,n,t}\cong\mathscr Y_{n,n,n-t}$.  
\end{thm}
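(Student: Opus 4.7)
The plan is to realize $X \mapsto X^{-1}$ as a genuine $K$-algebra automorphism $\varphi$ of $S_\Delta$ and then use Muir's theorem to track what it does to the principal $t$-minors of $X$. Since $X^{-1} = \Delta^{-1}\Adj(X)$, each entry of $X^{-1}$ lies in $S_\Delta$, so the assignment $x_{ij} \mapsto (X^{-1})_{ij}$ extends to a ring map $S \to S_\Delta$; because its effect on $\Delta$ is $\det(X^{-1}) = \Delta^{-1}$, a unit in $S_\Delta$, it extends further to a $K$-algebra map $\varphi : S_\Delta \to S_\Delta$. Matrix inversion being involutive, $\varphi^2 = \mathrm{id}$, and so $\varphi$ is an automorphism.

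Next I would trace each principal $t$-minor through $\varphi$. Fix a $t$-subset $\underbar i \subseteq \{1,\dots,n\}$ and let $f_{\underbar i}$ denote the corresponding principal $t$-minor of $X$. Then $\varphi(f_{\underbar i})$ is the principal $t$-minor of $\Delta^{-1}\Adj(X)$ indexed by $\underbar i$, and pulling the scalar $\Delta^{-1}$ out of each of the $t$ rows yields $\Delta^{-t}$ times the principal $t$-minor of $\Adj(X)$ indexed by $\underbar i$. Muir's theorem rewrites this last minor as $\Delta^{t-1}\cdot X_{\underbar i,\underbar i}$; in the principal case the sign $(-1)^{2(i_1 + \cdots + i_t)}$ is $+1$, so $X_{\underbar i,\underbar i}$ is exactly the principal $(n-t)$-minor of $X$ on the complementary index set $\underbar i^c$. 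Denoting the latter $f_{\underbar i^c}$, the whole calculation collapses to
\[
\varphi(f_{\underbar i}) = \Delta^{-1} f_{\underbar i^c}.
\]

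As $\underbar i$ ranges over the $t$-subsets of $\{1,\dots,n\}$, $\underbar i^c$ ranges over the $(n-t)$-subsets, and since $\Delta$ is a unit in $S_\Delta$ the identity above gives $\varphi(\pr_t S_\Delta) = \pr_{n-t} S_\Delta$. Consequently $\varphi$ descends to a $K$-algebra isomorphism $S_\Delta/\pr_t S_\Delta \cong S_\Delta/\pr_{n-t} S_\Delta$, and passing to $\Spec$ produces the desired scheme isomorphism $\mathscr Y_{n,n,t} \cong \mathscr Y_{n,n,n-t}$, since these two locally closed sets are cut out of $\Spec S_\Delta$ precisely by the extensions of $\pr_t$ and $\pr_{n-t}$. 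I do not expect a serious obstacle: the only delicate point is the sign bookkeeping in Muir's identity, but this is trivial in the principal case because $\sigma$ is always even.
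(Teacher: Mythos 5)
Your proof is correct and follows essentially the same route as the paper's: realize $X\mapsto X^{-1}$ as the self-inverse $K$-algebra automorphism of $S_\Delta$, apply Muir's identity to see that a principal $t$-minor of $X$ is sent to $\Delta^{-1}$ times the complementary principal $(n-t)$-minor, conclude $\pr_t S_\Delta$ and $\pr_{n-t} S_\Delta$ correspond, and descend to $\Spec$. The only cosmetic difference is that you carry out the exponent bookkeeping ($\Delta^{-t}\cdot\Delta^{t-1}=\Delta^{-1}$) more explicitly than the paper does.
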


\begin{proof} 
The set $\mathscr Y_{n,n,t}$ is the subscheme of $\Spec(S_{\Delta})\cong\GL(n,K)$, the general linear group of order $n$ over $K$, defined by the vanishing of the ideal $\pr_tS_{\Delta}$.  The general linear group has an automorphism, $g\mapsto g^{-1}$ for each $g\in\GL(n,K)$, which induces a $K$-algebra automorphism, $\Phi:S_{\Delta}\to S_{\Delta}$, which sends the entries of $X$ to the respectively indexed entries of $X^{-1}$:
\[
\Phi:x_{ij}\mapsto(-1)^{i+j}\frac{1}{\Delta}X_{ji}
\]
It is clear $\Phi$ is its own inverse.  By Muir's theorem (Equation \eqref{eq:muir}), each $t\times t$ minor of $X$ is mapped to the complementarily indexed $(n-t)\times(n-t)$ minor of $X^{-1}$, multiplied by $\Delta^{1-t}$.  Hence $\pr_tS_{\Delta}\leftrightarrow\pr_{n-t}S_{\Delta}$.  And,
\[
\frac{S_{\Delta}}{\pr_tS_{\Delta}}\cong\frac{S_{\Delta}}{\pr_{n-t}S_{\Delta}}
\]
induces the isomorphism on the respective schemes. 
\end{proof}

\begin{cor}\label{cor:Qn-1} 
The locally closed set $\mathscr Y_{n,n,n-1}$ is irreducible, with codimension $n$. 
\end{cor}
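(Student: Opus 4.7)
The plan is to reduce to the trivial case $t=1$ via Theorem~\ref{thm:invBij} and then read off the answer directly. Applying that theorem with $t=1$ gives an isomorphism of locally closed subschemes
\[
\mathscr Y_{n,n,n-1}\cong\mathscr Y_{n,n,1}
\]
inside $\Spec(S_\Delta)\cong\GL(n,K)$. So it suffices to prove that $\mathscr Y_{n,n,1}$ is irreducible of codimension $n$ in $\mathbb A^{n^2}$.

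This is essentially immediate. The ideal $\pr_1$ is generated by the diagonal entries $x_{11},\dots,x_{nn}$, so $\Var(\pr_1)$ is the linear subspace of $\mathbb A^{n^2}$ cut out by a regular sequence of $n$ linear forms; in particular it is an affine space of dimension $n^2-n$, hence irreducible. By definition $\mathscr Y_{n,n,1}$ is its open subset where $\Delta\neq 0$, and a nonempty open subset of an irreducible variety is irreducible of the same dimension. Codimension in $\GL(n,K)$ coincides with codimension in $\mathbb A^{n^2}$ because $\GL(n,K)$ is open and dense in $\mathbb A^{n^2}$, so the isomorphism above transports the codimension statement from $\mathscr Y_{n,n,1}$ to $\mathscr Y_{n,n,n-1}$.

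The only point that requires any verification at all is nonemptiness of $\mathscr Y_{n,n,1}$, which rules out the pathological possibility that all zero-diagonal matrices are singular. For $n\geq 2$ the permutation matrix of the $n$-cycle $(1\,2\,\cdots\,n)$ has zero diagonal and determinant $\pm 1$, so $\mathscr Y_{n,n,1}$ is nonempty. (The $n=1$ case is vacuous since $t\geq 1$ forces $t=n$.) Since there is no real obstacle to the argument, I would not expect any step to be hard; the entire content of the corollary lies in Theorem~\ref{thm:invBij}, which converts the mysterious $(n-1)$-minor stratum on $\GL(n,K)$ into a linear coordinate subspace intersected with $\GL(n,K)$.
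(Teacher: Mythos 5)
Your proof is correct and is essentially the paper's own argument: apply Theorem~\ref{thm:invBij} with $t=1$ to reduce to $\mathscr Y_{n,n,1}$, which is the nonempty open subset of a linear coordinate subspace of codimension $n$, hence irreducible of codimension $n$. The paper simply states this reduction and calls the $t=1$ case clear, while you spell out the nonemptiness (via a fixed-point-free permutation matrix) and the dimension bookkeeping, but the route is the same.
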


\begin{proof} 
By Theorem \ref{thm:invBij} it is enough to look at 
\[
\mathscr Y_{n,n,1}\cong\Spec \frac{S_{\Delta}}{\pr_1S_{\Delta}},
\]
which is clearly irreducible of codimension $n$. 
\end{proof}

\begin{cor}\label{cor:Qn-2} 
The locally closed set $\mathscr Y_{n,n,n-2}$ is irreducible, with codimension $\binom{n}{2}$. 
\end{cor}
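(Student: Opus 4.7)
The plan is to combine the isomorphism $\mathscr Y_{n,n,n-2}\cong\mathscr Y_{n,n,2}$ from Theorem~\ref{thm:invBij} with what we already know about $\pr_2$ from Section~\ref{sec:p2X}. The problem then reduces to showing that $\mathscr Y_{n,n,2}=\Spec(S_{\Delta}/\pr_2 S_{\Delta})$ is irreducible of codimension $\binom{n}{2}$ in $\Spec S_{\Delta}$.

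By Theorem~\ref{thm:p2X}, $\pr_2$ is a prime complete intersection cut out by the $\binom{n}{2}$ principal 2-minors, so $\hgt\pr_2=\binom{n}{2}$. Heights of primes are preserved under localization at elements outside the prime, and a localization of a domain at a nonzero element remains a domain, so both claims — irreducibility and the codimension count — fall out at once, provided I can verify that $\Delta\notin\pr_2$.

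To verify the latter it suffices to exhibit a single rank $n$ matrix annihilating all principal 2-minors. For $n\geq 3$ the cyclic shift matrix with $x_{i,i+1}=1$ (indices mod $n$) and all other entries zero has zero diagonal and satisfies $x_{ij}x_{ji}=0$ for every $i\neq j$, so every principal 2-minor vanishes, while $\det=\pm 1$. This point of $\Var(\pr_2)$ lies outside $V(\Delta)$, hence $\Delta$ has nonzero image in the domain $S/\pr_2$, and in particular $\Delta\notin\pr_2$. No genuine obstacle arises; the substantive work has already been done in Theorems~\ref{thm:invBij} and~\ref{thm:p2X}, and what remains is only this short rank check.
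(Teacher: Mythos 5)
Your proof is correct and takes essentially the same route as the paper: reduce via Theorem~\ref{thm:invBij} to the statement for $\mathscr Y_{n,n,2}$, then quote from Theorem~\ref{thm:p2X} that $\pr_2$ is a prime complete intersection of height $\binom{n}{2}$. The only addition is your explicit cyclic-shift matrix verifying $\Delta\notin\pr_2$, a nonemptiness check the paper's argument leaves implicit; that is a worthwhile detail rather than a departure.
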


\begin{proof} 
Apply Theorem \ref{thm:p2X} to $\mathscr Y_{n,n,2}$: Since $\Var(\pr_2)$ is irreducible of codimension $\binom{n}{2}$, so is its open subset, $\mathscr Y_{n,n,2}\subset\Var(\pr_2)$. 
\end{proof}

\subsection{Principal $(n-1)$-Minors Case}\label{sec:n-1}

In this section we shall assert $n\geq 4$, which suffices in studying the minimal primes for $\pr_{n-1}$, since $\pr_2(X_{3\times 3})$ and $\pr_1(X_{2\times 2})$ are both prime.  It turns out that when $n\geq 4$, the determinantal ideal, $\I_{n-1}$ (generated by all size $(n-1)$ minors of $X$), is a minimal prime for $\pr_{n-1}$ (this is part of the statement of Theorem \ref{thm:n-1}).  To see $\I_{n-1}$ cannot be the only minimal prime, note the following examples.

\begin{ex}\label{ex:otherMinPrime} 
Say $n=4$.  The matrix 
\[
\left(
\begin{smallmatrix}
0 & 0 & 0 & 1 \\
	1 & 1 & 1 & 0 \\
	0 & 1 & 1 & 0 \\
	0 & 0 & 1 & 0 
	\end{smallmatrix}
	\right)\in\mathscr Y_{4,4,3}\subset\Var(\pr_3)
	\]
is of full rank and its principal 3-minors vanish.  Therefore, $\Var(\I_3)\subsetneq\Var(\pr_3)$ as algebraic sets.  
\end{ex}

\begin{ex}\label{ex:otherMinPrime2} 
In general, given a permutation $\tau$ of $\{1,\dots,n\}$ with no fixed points, its corresponding matrix has zeros on the main diagonal.  Hence, the matrix of $\tau^{-1}$ has full rank and its $(n-1)$-size principal minors vanish.  Since $\tau$ and $\tau^{-1}$ have the same fixed points, this applies to $\tau$ as well. 
\end{ex}

We shall see the only other minimal prime for $\pr_{n-1}$ is the defining ideal for the closure of $\mathscr Y_{n,n,n-1}$.  In other words, we claim the contraction of the ideal $\ker\Phi=\pr_{n-1}S_{\Delta}$ to $S$, as in Theorem \ref{thm:invBij}, is a minimal prime for $\pr_{n-1}$.  Let $\mathfrak Q_{n-1}=\mathfrak Q_{n-1}(X)$ denote this contraction.  Let $S=K[X]$, $\Delta=\det X$, and $S_{\Delta}=S[\frac{1}{\Delta}]$, as in Theorem \ref{thm:invBij}.  For any set $\mathscr Y\in\Spec S$, let $\overline{\mathscr Y}$ denote its closure.  

\begin{thm}\label{thm:n-1} 
For all $n\geq 4$, the minimal primes of $\pr_{n-1}$ are exactly $\I_{n-1}$ and $\mathfrak Q_{n-1}$. 
\end{thm}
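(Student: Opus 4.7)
The plan is to combine Krull's principal ideal theorem with the rank stratification of $\Var(\pr_{n-1})$. Because $\pr_{n-1}$ is generated by $n$ elements (the principal $(n-1)$-minors), every minimal prime has height at most $n$, so every irreducible component of $\Var(\pr_{n-1})$ has dimension at least $n^2-n$. After verifying that $\I_{n-1}$ and $\mathfrak Q_{n-1}$ are incomparable primes containing $\pr_{n-1}$, I would show via the stratification that no further minimal primes can arise.

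The determinantal ideal $\I_{n-1}$ is classically prime of height $4$. By Theorem \ref{thm:invBij}, the extension $\pr_{n-1}S_{\Delta}$ is isomorphic to $\pr_1S_{\Delta}$, which is prime because $S/\pr_1$ is a polynomial ring in which the image of $\Delta$ is nonzero; its contraction $\mathfrak Q_{n-1}$ is therefore also prime, of height $n$ by Corollary \ref{cor:Qn-1}. The two ideals are incomparable: $\Delta$ lies in $\I_{n-1}\setminus\mathfrak Q_{n-1}$ (the former by Laplace expansion, the latter because $\mathfrak Q_{n-1}$ is a contraction from a localization at $\Delta$), and Example \ref{ex:otherMinPrime} exhibits a full-rank matrix in $\Var(\mathfrak Q_{n-1})\setminus\Var(\I_{n-1})$.

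Now let $P$ be any minimal prime of $\pr_{n-1}$. If $\Delta\notin P$ then $PS_{\Delta}$ is a minimal prime of the prime ideal $\pr_{n-1}S_{\Delta}$, hence equals it, forcing $P=\mathfrak Q_{n-1}$ upon contraction. Otherwise $\Var(P)\subseteq\Var(\Delta)$, so matrices in $\Var(P)$ have rank at most $n-1$. If the generic rank is at most $n-2$ then $\Var(P)\subseteq\Var(\I_{n-1})$ and minimality of $P$ forces $P=\I_{n-1}$. The remaining possibility, that the generic rank on $\Var(P)$ is exactly $n-1$ so $\Var(P)\subseteq\overline{\mathscr Y}_{n,n-1,n-1}$, is the crux, and I expect ruling it out to be the main obstacle. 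The approach is a dimension count using the adjugate: for a rank $n-1$ matrix $A$, the matrix $\Adj(A)$ has rank $1$ and factors as $\lambda\, vw\transpose$ with $v\in\ker A$ and $w\in\ker A\transpose$, while the principal $(n-1)$-minors of $A$ are the diagonal entries of $\Adj(A)$, so $A\in\Var(\pr_{n-1})$ iff $v_iw_i=0$ for every $i$. The locus of such pairs in $\Prj^{n-1}\times\Prj^{n-1}$ decomposes into irreducible pieces indexed by partitions $\{1,\dots,n\}=I\sqcup J$ (with $v$ supported on $J$ and $w$ on $I$), each of dimension $n-2$; together with the $(n-1)^2$-dimensional fiber of matrices having prescribed one-dimensional kernels, this yields $\dim\mathscr Y_{n,n-1,n-1}=n^2-n-1<n^2-n$, contradicting the Krull bound and completing the classification.
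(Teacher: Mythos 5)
Your proof is correct and follows the same overall skeleton as the paper's (stratify by rank, show $\I_{n-1}$ and $\mathfrak Q_{n-1}$ are incomparable minimal primes, rule out the rank-$(n-1)$ stratum by a dimension count against Krull's height bound), but your dimension count for the rank-$(n-1)$ locus is a genuinely different and cleaner device. The paper proceeds by an explicit rank-$(n-1)$ factorization $B = L\cdot B(\underbar i;\underbar j)\cdot R$ (Equation \eqref{eq:n-1Factor}), reduced to a fixed $(\underbar i,\underbar j)$ by permutation equivariance, and then counts parameters $c_k,c_k'$ killed by the remaining principal-minor conditions. You instead observe that for a rank-$(n-1)$ matrix $A$, the adjugate is a rank-one matrix $\lambda vw\transpose$ with $v\in\ker A$, $w\in\ker A\transpose$, and that the diagonal of $\Adj A$ consists exactly of the principal $(n-1)$-minors; so $A\in\Var(\pr_{n-1})$ iff the Hadamard product of $v$ and $w$ vanishes. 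The incidence variety $\{v_iw_i=0\ \forall i\}\subset\Prj^{n-1}\times\Prj^{n-1}$ has dimension $n-2$, and the fiber of matrices with prescribed $(\ker A,\ker A\transpose)$ has dimension $(n-1)^2$, giving the same bound $n^2-n-1$ as the paper but without any coordinate choices or case analysis of which $c_k$ vanish. Both arguments then contradict the Krull lower bound $n^2-n$ on the dimension of a component.

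One small presentational point: the two pieces of evidence you give for incomparability both show $\I_{n-1}\not\subseteq\mathfrak Q_{n-1}$ (a point of $\Var(\mathfrak Q_{n-1})\setminus\Var(\I_{n-1})$ is equivalent, via the Nullstellensatz and primality, to $\I_{n-1}\not\subseteq\mathfrak Q_{n-1}$ — it is not the other direction). The reverse non-containment $\mathfrak Q_{n-1}\not\subseteq\I_{n-1}$ does follow from facts you state — $\hgt\mathfrak Q_{n-1}=n\geq 4=\hgt\I_{n-1}$ rules out a strict inclusion of primes, and $\Delta$ rules out equality — but you should assemble that explicitly rather than offering a second witness for the same inclusion. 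The paper handles $n>4$ by heights and $n=4$ by the witness matrix precisely to cover both directions.
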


\begin{proof} 
An immediate observation we make is when $r<t$, the matrices in $\mathscr Y_{n,r,t}$ have rank strictly less than $t$, so are in $\Var(\I_r)\subset\Var(\I_t)$, where $\I_r,\I_t$ are determinantal ideals.  In other words, if $r<t$, then any associated prime for the defining ideal of the closure of $\mathscr Y_{n,r,t}$ must also contain $\I_t$.  

The proof proceeds as follows: First, we show $\mathfrak Q_{n-1}$ is indeed a minimal prime for $\pr_{n-1}$.  Then, we show $\mathfrak Q_{n-1}$ and $\I_{n-1}$ are incomparable.  It then remains to analyze the case where a point $A\in\Var(\pr_{n-1})$ has rank $r=n-1$.  We will show the components containing $A$ in that case are embedded components in $\Var(\pr_{n-1})$.  From that, we then conclude $\I_{n-1}$ is the only other minimal prime for $\pr_{n-1}$.  

Let $D_{\Delta}$ denote the distinguished open set in $\Spec S$ consisting of invertible matrices.  We have 
\[
\left(\Var(\pr_{n-1})\cap D_{\Delta}\right)\subseteq\Var(\mathfrak Q_{n-1})\subseteq\Var(\pr_{n-1}).
\]
Furthermore, by Theorem \ref{thm:invBij}, 
\[
\Var(\mathfrak Q_{n-1})=\overline{\Var(\pr_{n-1})\cap D_{\Delta}}\cong\overline{\Var(\pr_1)\cap D_{\Delta}}
\] 
is exactly the closure in $\Spec S$ of the set of invertible matrices whose inverses have all zeros on the diagonal.  On the other hand, $\Var(\pr_{n-1})\cap D_{\Delta}$ is dense in $\Var(\pr_{n-1})$, so
\[
\dim\left(\Var(\pr_{n-1})\cap D_{\Delta}\right)=\dim(\pr_{n-1}).
\]
Any prime strictly contained in $\mathfrak Q_{n-1}$ must have height smaller than $\hgt \mathfrak Q_{n-1}$, and so cannot contain $\pr_{n-1}$.  Therefore $\mathfrak Q_{n-1}$ is a minimal prime for $\pr_{n-1}$.

We now show $\I_{n-1}$ and $\mathfrak Q_{n-1}$ are incomparable.  When $n>4$ we clearly cannot have $\pr_{n-1}\subseteq\I_{n-1}\subseteq\mathfrak Q_{n-1}$, because $\mathfrak Q_{n-1}$ is a minimal prime, and $\hgt\I_{n-1}=4\neq n$.  The difference in height also shows why we cannot have $\mathfrak Q_{n-1}\subseteq\I_{n-1}$ for $n>4$.  When $n=4$, $\I_{n-1}$ and $\mathfrak Q_{n-1}$ each have height 4 so since they are prime, containment between them occurs if and only if they are equal.  However, Example \ref{ex:otherMinPrime2} exhibits a matrix in $\Var(\mathfrak Q_{n-1})\setminus\Var(\I_{n-1})$, so $\I_{n-1}\neq\mathfrak Q_{n-1}$.  

We look for any additional minimal primes of $\pr_{n-1}$, according to rank.  Choose $A\in\Var(\pr_{n-1})$.  If $\rank A$ is $n$, then $A\in\Var(\mathfrak Q_{n-1})$.  If $\rank A<n-1$, then the $(n-1)$-minors of $A$ must vanish, so $A\in\Var(\I_{n-1})$.  It remains to find the components of $\Var(\pr_{n-1})$ containing $A$ when $\rank A=n-1$.  We claim any such component is not defined by a minimal prime.  This will also imply $\I_{n-1}$ is minimal, since $\I_{n-1}$ and $\mathfrak Q_{n-1}$ are incomparable.

Say $\Var(\pr_{n-1})$ has a component, $\mathscr Y\neq\Var(\mathfrak Q_{n-1})$, and suppose $A\in\mathscr Y$ has rank $n-1$, so $\mathscr Y\neq\I_{n-1}$.  Then there exist $\underbar i\neq\underbar j$ such that $\det A(\underbar i;\underbar j)\neq 0$, an open condition.  Thus there exists a non-empty open set $\mathscr U\subseteq\mathscr Y$, still irreducible, on which $\det B(\underbar i;\underbar j)\neq 0$ for all $B\in\mathscr U$, and $\dim(\mathscr U)=\dim(\mathscr Y)$.  We note how, if $\tau$ is a size $n$ permutation matrix, $\tau\transpose$ its transpose, then the action $X\mapsto\tau\cdot X\cdot\tau\transpose$ performs the same permutation on the rows of $X$ as it does the columns, and hence preserves $\pr_t$.  So we assert, without loss of generality, that $\underbar i=\{1,\dots,n-1\}$ and $\underbar j=\{2,\dots,n\}$.  Now factor $B\in\mathscr U$: 
\begin{equation}\label{eq:n-1Factor}
B=\begin{tikzpicture}[baseline]
	\footnotesize
	\matrix[matrix of math nodes,
	left delimiter=(,right delimiter=)] (C) {
		0 & c_2 & \cdots & c_{n-1} \\
		0 & c_2 & \cdots & c_{n-1} \\
		0 & c_2 & \cdots & c_{n-1} \\
		0 & c_2 & \cdots & c_{n-1} \\ [.2pc]
		0 & c_2 & \cdots & c_{n-1} \\
		};
	\begin{pgfonlayer}{main}
		\node[fill=black!5,rounded corners,
		inner sep=0pt,
		fit=(C-1-1) (C-4-1) (C-4-4) (C-1-4)] {$\mathbf I_{(n-1)\times(n-1)}$}; 
	\end{pgfonlayer}
	\begin{pgfonlayer}{background}
		\node[fill=black!5,rounded corners,
		inner sep=0pt,
		fit=(C-5-1) (C-5-4)] {};
	\end{pgfonlayer}
	\end{tikzpicture}
B(\underbar i;\underbar j)	
\begin{tikzpicture}[baseline]
	\footnotesize
	\matrix[matrix of math nodes, 
	left delimiter=(,right delimiter=)] (C) {
		c_1' & [.2pc] c_1' & c_1' & c_1' & c_1'\\
		\vdots & & & \\
		c_{n-2}' & & & \\
		0 & 0 & 0 & 0 & 0 \\
		}; 
	\begin{pgfonlayer}{main}
		\node[fill=black!5,rounded corners,
		inner sep=0pt,
		fit=(C-1-2) (C-1-5) (C-4-2) (C-4-5)] {$\mathbf I_{(n-1)\times(n-1)}$};
	\end{pgfonlayer}
	\begin{pgfonlayer}{background}
		\node[fill=black!5,rounded corners,
		inner sep=0pt,
		fit=(C-1-1) (C-3-1) (C-4-1)] {};
	\end{pgfonlayer}
	\end{tikzpicture}.
\end{equation}

The remaining $n-2$ principal minor conditions force $n-2$ of the parameters $c_2,\dots,c_{n-1}$, $c_1',\dots, c_{n-2}'$ to vanish.  What is left are $n-2$ non-zero parameters, along with the $(n-1)^2$ parameters that are the entries of $B(\underbar i;\underbar j)$.  We have 
\[
\dim \mathscr U\leq(n-1)^2+(n-2)=n^2-(n+1).
\]
But $\pr_{n-1}$ has $\binom{n}{n-1}=n<n+1$ generators, so the closure $\overline{\mathscr U}=\mathscr Y$ cannot be a component.  
\end{proof}  

\begin{cor}\label{cor:n-1} 
For $n\neq 3$, $\hgt\pr_t\leq\binom{n+1}{2}-\binom{t+2}{2}+4$.
\end{cor}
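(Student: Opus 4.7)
The plan is to prove the bound by induction on $n \geq t+1$ for each fixed $t$, using Theorem~\ref{thm:n-1} as the base case and a single-row specialization for the inductive step. For the base case $n = t+1$ with $t \geq 3$, Theorem~\ref{thm:n-1} applied to a generic $(t+1) \times (t+1)$ matrix gives $\hgt \pr_t \leq 4$, matching the claimed bound $\binom{t+2}{2} - \binom{t+2}{2} + 4 = 4$. The small cases $t = 1$ (where $\pr_1$ is the ideal of diagonal entries and so $\hgt \pr_1 = n$) and $t = 2$ (where, by Theorem~\ref{thm:p2X}, $\pr_2$ is a complete intersection of height $\binom{n}{2}$) are checked directly against the formula.

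For the inductive step, I would embed the generic $n \times n$ matrix $X$ as the upper-left block of a generic $(n+1) \times (n+1)$ matrix $X'$, and introduce the ideal $J \subset K[X']$ generated by the $n+1$ entries of the last row of $X'$. Then $J$ is generated by a regular sequence of indeterminates, so $\hgt J = n+1$ and $K[X']/J$ is a polynomial extension of $K[X]$ (by the variables in the last column of $X'$). Under the quotient map, any principal $t$-minor of $X'$ indexed by $I \ni n+1$ acquires a zero row and hence vanishes, while any principal $t$-minor indexed by $I \subseteq \{1,\dots,n\}$ descends to the corresponding principal $t$-minor of $X$. Therefore the image of $\pr_t(X')$ in $K[X']/J$ is exactly the extension of $\pr_t(X)$, whose height equals $\hgt \pr_t(X)$ by faithful flatness.

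Combining these pieces,
\[
\hgt \pr_t(X') \;\leq\; \hgt\bigl( \pr_t(X') + J \bigr) \;=\; \hgt J + \hgt \pr_t(X) \;\leq\; (n+1) + \binom{n+1}{2} - \binom{t+2}{2} + 4,
\]
and $\binom{n+2}{2} = \binom{n+1}{2} + (n+1)$ yields the desired bound at size $n+1$. The main obstacle is verifying cleanly that the last-row specialization sends $\pr_t(X')$ precisely to the extension of $\pr_t(X)$; once that is in hand the height computation is routine, and the telescoping sum $\sum_{k=t+2}^{n} k = \binom{n+1}{2} - \binom{t+2}{2}$ falls out of the iteration. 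The exclusion $n \neq 3$ is natural because the induction bottoms out at $n = t+1 \geq 4$, which is the smallest size for which Theorem~\ref{thm:n-1} is available.
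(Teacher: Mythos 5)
Your proposal is correct and is essentially the paper's argument, recast as an induction: the paper kills the first $t'$ entries of row $t'$ for $t'=n,n-1,\dots,t+2$ in one pass, and your inductive step (kill the last row of the $(n+1)\times(n+1)$ matrix, observe that all principal $t$-minors meeting that row vanish, and that the rest extend $\pr_t(X)$ faithfully flatly) unrolls to exactly the same telescoping count $\sum_{k=t+2}^n k=\binom{n+1}{2}-\binom{t+2}{2}$ before the appeal to Theorem~\ref{thm:n-1}. The only cosmetic difference is that you handle $t=1,2$ as separate base cases, while the paper handles small $n$ directly; both correctly restrict to $t\leq n-1$, where the formula is actually valid.
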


\begin{proof} 
We estimate the height by killing variables in $K[X]$, as in Figure \ref{fig:newBound}.  
\begin{figure}
\centering
\includegraphics[width=0.8\linewidth]{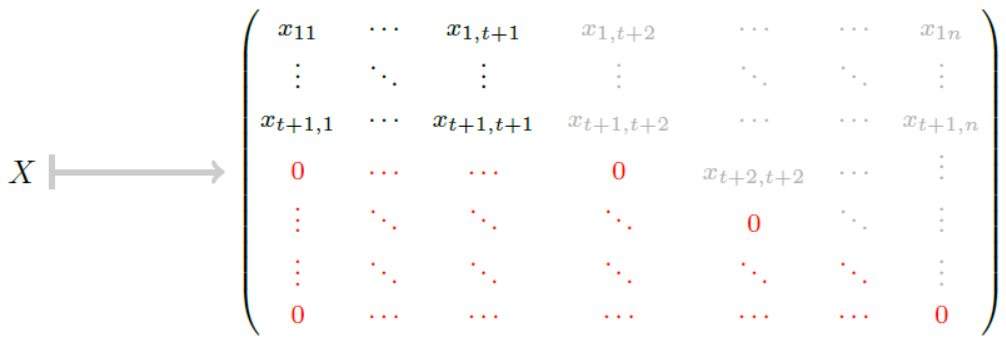}
\caption{Estimation of $\hgt\left(\pr_t(X)\right)$ from Corollary \ref{cor:n-1}.  It is enough to kill the first $t'$ entries of the $t'$th row, $t+2\leq t'\leq n$, to ensure the principal minors involving that row vanish.}
\label{fig:newBound}
\end{figure}
If we first kill the last row of $X$ then any principal minor involving that row, and hence, the last column, must vanish.  Therefore, if we want the principal minors involving the second-to-last row to vanish, it is enough to kill the first $n-1$ entries.  We may continue this argument inductively until we get to the $(t+1)$st row, having killed $n+(n-1)+\cdots+n-(n-t-2)=\binom{n+1}{2}-\binom{t+2}{2}$ variables so far.  When $n\geq 4$, Theorem \ref{thm:n-1} says the $t$-minors of the upper left $(t+1)\times(t+1)$ submatrix of $X$ have height 4, and that is independent of the variables we already killed, so we get the desired bound in that case.  In the cases where $n=1,2$ we may directly compute the height to see it satisfies the desired bound.  \end{proof}

\section{Special Case: $\pr_3(X_{4\times 4})$}\label{sec:ci}

In general, Theorem \ref{thm:n-1} gives us $\pr_{n-1}\subseteq\I_{n-1}\cap \mathfrak Q_{n-1}$.  However, computations in \emph{Macaulay2} show, in several prime characteristics, that equality holds for $n=4$.  We shall show, in fact, that $\pr_3(X_{4\times 4})$ is reduced in \emph{all} characteristics.  As a consequence, $\I_3(X_{4\times 4})$ and $\mathfrak Q_3(X_{4\times 4})$ are linked and we list relevant corollaries of this fact.  Throughout this section, unless specified otherwise, $n=4$.  We first need the following more general lemma.

\begin{lem}\label{lem:Qminors} 
For any $n$, $\mathfrak Q_{n-1}$ does not contain any size $r<n-1$ minors of $X$. 
\end{lem}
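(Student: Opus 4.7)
The plan is to transport the problem via the automorphism $\Phi$ of $S_\Delta$ from Theorem~\ref{thm:invBij} to a question about the much simpler ideal $\pr_1=(x_{11},\ldots,x_{nn})$. By definition $\mathfrak Q_{n-1}=\pr_{n-1}S_\Delta\cap S$, and $\Phi$ carries $\pr_{n-1}S_\Delta$ onto $\pr_1 S_\Delta$, so an $r$-minor $f$ of $X$ lies in $\mathfrak Q_{n-1}$ if and only if $\Phi(f)\in\pr_1 S_\Delta$. By Muir's identity \eqref{eq:muir} applied to $\Adj(X)$, the size-$r$ minor of $X^{-1}$ at rows $\underbar i$ and columns $\underbar j$ equals $\Delta^{-1}X_{\underbar i,\underbar j}$, which up to a sign is $\Delta^{-1}$ times an $(n-r)$-minor of $X$. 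Thus I would reduce the question to whether some $(n-r)$-minor of $X$ lies in $\pr_1 S_\Delta\cap S$.

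Next I would observe that $\pr_1$ is prime---the quotient $S/\pr_1$ is a polynomial ring in $n^2-n$ variables---and that $\Delta\notin\pr_1$, because $\det X$ contains the cyclic-derangement monomial $x_{1,2}x_{2,3}\cdots x_{n-1,n}x_{n,1}$, which involves no diagonal entry. From primality and $\Delta\notin\pr_1$ it follows that $\pr_1 S_\Delta\cap S=\pr_1$, so the problem reduces further to showing that when $n-r\geq 2$, no $(n-r)$-minor of $X$ belongs to $\pr_1$.

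The heart of the argument is this final, purely combinatorial step. An $s$-minor of $X$ with row set $I$ and column set $J$ fails to lie in $\pr_1$ precisely when it admits a monomial $\prod_{k\in I}x_{k,\sigma(k)}$ coming from a bijection $\sigma\colon I\to J$ with $\sigma(k)\neq k$ for every $k\in I\cap J$. I would exhibit such a $\sigma$ directly: setting $C=I\cap J$, the case $C=\emptyset$ is immediate, the case $I=J$ needs only a derangement of $C$ (which exists since $|C|=s\geq 2$), and the general case is handled by combining such a map on $C$ with any bijection $I\setminus C\to J\setminus C$, possible because these sets have equal cardinality. The subcase I expect to be most awkward is $|C|=1$, where no self-derangement of $C$ is available; here I would route the forced fixed point through an element of $I\setminus C$ (nonempty because $|I|=s\geq 2$), and a brief Hall's-theorem check confirms this yields a legitimate $\sigma$. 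The resulting monomial appears in the minor with coefficient $\pm 1$ and contains no $x_{kk}$, so the minor is not in $\pr_1$, and hence $f\notin\mathfrak Q_{n-1}$.
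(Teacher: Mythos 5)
Your proof is correct, and it takes a genuinely different route from the paper's. The paper argues geometrically: it exhibits an explicit point of $\Var(\mathfrak Q_{n-1})$, namely a derangement permutation matrix $A$ with $A^{-1}=A\transpose$, at which the chosen $r$-minor is nonzero, by building $A\transpose$ column by column so that its $(\underbar j,\underbar i)$-submatrix is a permutation matrix avoiding the diagonal. You instead work on the ideal side: transport the minor through $\Phi$, use Muir's identity to identify the image (up to a unit) with the complementary $(n-r)$-minor, reduce membership in $\pr_1 S_\Delta$ to membership in $\pr_1$ via primality of $\pr_1$ and $\Delta\notin\pr_1$, and then find a derangement-type monomial in that $(n-r)$-minor. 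Both routes ultimately hinge on the same combinatorial fact — that a minor of size at least $2$ has a term avoiding the diagonal, which reduces to finding a bijection with no fixed points on the overlap of the row and column index sets — but your framing is cleaner in one respect: you always work with minors of size $s=n-r\geq 2$, so the singleton-minor case never arises. The paper's construction, taken literally, would need a $1\times 1$ permutation matrix $A'(\{i\};\{i\})$ off the diagonal of $A'$ when $r=1$ and $\underbar i=\underbar j$, which does not exist; the lemma is still true there (as your argument shows), and the paper only ever invokes the lemma for $r=2$, but your approach avoids that awkwardness entirely. You also correctly flag and resolve the $|C|=1$ subcase, which the paper's wording glosses over.
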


\begin{proof} 
Choose an $r\times r$ minor of $X$, indexed by the rows $\underbar i=\{i_1,\dots,i_r\}$ and columns $\underbar j=\{j_1,\dots,j_r\}$ of $X$.  We shall exhibit an $n\times n$ matrix $A\in\Var(\mathfrak Q_{n-1})$, whose $\left(\underbar i,\underbar j\right)$-minor is non-zero.  If we choose $A$ as a permutation matrix of the identity matrix $\vect I_{n\times n}$, then since permutation matrices are orthogonal, it shall suffice to construct a matrix $A'=A\transpose$, the transpose of $A$, whose main diagonal is all zeros, and whose $\left(\underbar j,\underbar i\right)$-minor does not vanish.  

We construct the submatrix $A'(\underbar j;\underbar i)$ by setting it equal to a permutation matrix of $\vect I_{r\times r}$ such that any entries on the main diagonal of $A'$ are zero.  Then in $A'$ put zeros in the remaining entries in the columns $\underbar i$.  Now complete the standard basis of column vectors, permuting the remaining columns so that the entries on the main diagonal of $A'$ are zero.  
\end{proof}

\begin{thm}\label{thm:red} 
Suppose $n=4$.  Then $\pr_{n-1}=\pr_{3}$ is reduced, and hence $\pr_{3}=\I_{3}\cap \mathfrak Q_3$. 
\end{thm}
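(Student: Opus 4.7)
The plan is to prove the stronger equality $\pr_3 = \I_3 \cap \mathfrak Q_3$, from which reducedness follows at once since the right-hand side is the intersection of two prime ideals. The key observation is that for $n=4$ the ideal $\pr_3$ is a complete intersection, which opens the door to Peskine--Szpiro linkage with the well-understood determinantal ideal $\I_3$ as the partner.

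First I would observe that $\pr_3$ is minimally generated by the $\binom{4}{3}=4$ principal $3$-minors, while Theorem~\ref{thm:n-1} gives that both minimal primes $\I_3$ and $\mathfrak Q_3$ have height $4$. Therefore $\hgt(\pr_3)=4$ and $\pr_3$ is a complete intersection, so $K[X]/\pr_3$ is Gorenstein, and in particular Cohen--Macaulay and unmixed. Since $K[X]/\I_3$ is also Cohen--Macaulay (Hochster--Eagon) and $\I_3$ is prime of the same height as $\pr_3$, Peskine--Szpiro linkage applies to the pair $(\pr_3, \I_3)$: setting $J := \pr_3 :_{K[X]} \I_3$, we get $K[X]/J$ Cohen--Macaulay and $\pr_3 = \I_3 \cap J$. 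The problem then reduces entirely to identifying $J$ with $\mathfrak Q_3$.

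To identify $J$, I would first argue that $J$ is $\mathfrak Q_3$-primary. Being CM, $J$ is unmixed, so its associated primes are among the height-$4$ minimal primes containing $\pr_3$, namely $\I_3$ and $\mathfrak Q_3$. The inclusion $J \subseteq \mathfrak Q_3$ follows from primality: pick $x \in \I_3 \setminus \mathfrak Q_3$ (available by the incomparability proved in Theorem~\ref{thm:n-1}); then for any $f \in J$, $fx \in \pr_3 \subseteq \mathfrak Q_3$ forces $f \in \mathfrak Q_3$. That $J \not\subseteq \I_3$ I would prove by contradiction: if $J \subseteq \I_3$, then $\pr_3 = \I_3 \cap J = J = \pr_3 : \I_3$, contradicting the fact that $\I_3$ is an associated prime of $\pr_3$ and hence $\pr_3 \subsetneq \pr_3 : \I_3$. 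So $\mathfrak Q_3$ is the unique minimal prime of $J$, and $J$ is $\mathfrak Q_3$-primary.

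Finally, to upgrade from $\mathfrak Q_3$-primary to $J=\mathfrak Q_3$, I would localize at $\mathfrak Q_3$. Since $\Var(\mathfrak Q_3)$ contains invertible matrices, $\Delta \notin \mathfrak Q_3$, and the defining relation $\pr_3 S_\Delta = \mathfrak Q_3 S_\Delta$ gives $\pr_3 S_{\mathfrak Q_3} = \mathfrak Q_3 S_{\mathfrak Q_3}$. Since $\I_3 \not\subseteq \mathfrak Q_3$, the ideal $\I_3 S_{\mathfrak Q_3}$ is the unit ideal, so $J S_{\mathfrak Q_3} = \pr_3 S_{\mathfrak Q_3} = \mathfrak Q_3 S_{\mathfrak Q_3}$, and contracting back (which is permitted because $J$ is $\mathfrak Q_3$-primary) yields $J = \mathfrak Q_3$, whence $\pr_3 = \I_3 \cap \mathfrak Q_3$. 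The main obstacle I expect is verifying cleanly that the Peskine--Szpiro linkage result yields the geometric identity $\pr_3 = \I_3 \cap J$ in the graded polynomial-ring setting used here rather than a local one; alternatively, one can work locally and replace the linkage step by a direct application of Serre's criterion $R_0 + S_1$ to $K[X]/\pr_3$, where $R_0$ at $\mathfrak Q_3$ is handled by the same $\Delta$-localization and $R_0$ at $\I_3$ turns into essentially the same colon-ideal computation.
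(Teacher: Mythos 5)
Your strategy is genuinely different from the paper's and is an attractive way to organize the problem: the paper establishes reducedness by localizing at two $2\times 2$ minors $\delta_1,\delta_2\notin\I_3\cup\mathfrak Q_3$, reducing $\pr_3$ to a principal ideal there, and then verifying by an explicit \emph{Macaulay2} factorization over $\Z$ that the generator is square-free modulo $\pr_3$; it then \emph{deduces} the linkage statement as a consequence. You instead try to run the linkage machinery first and deduce reducedness from it.

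However, there is a genuine gap, and it sits precisely at the hard part. In the step establishing that $J:=\pr_3:\I_3$ is $\mathfrak Q_3$-primary you write ``if $J\subseteq\I_3$, then $\pr_3=\I_3\cap J=J$, contradiction,'' which invokes $\pr_3=\I_3\cap J$. But the Peskine--Szpiro proposition as cited gives $\I_3=\pr_3:J$, Cohen--Macaulayness of $K[X]/J$, and the canonical-module description; it does \emph{not} give $\pr_3=\I_3\cap J$. That equality is the statement that the link is geometric, which is equivalent to $\I_3$ and $J$ having no common associated prime, i.e., to $\I_3\notin\operatorname{Ass}(J)$ -- which is exactly the assertion you are trying to prove there. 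Unwinding further, $\I_3\notin\operatorname{Ass}(J)$ is equivalent to $\pr_3 S_{\I_3}=\I_3 S_{\I_3}$, that is, to $\pr_3$ being generically reduced along the determinantal component. Your $\Delta$-localization handles the analogous statement at $\mathfrak Q_3$ cleanly (because $\mathfrak Q_3$ is by construction the contraction of $\pr_3 S_\Delta$), but there is no comparable formal mechanism at $\I_3$: $\Delta\in\I_3$, so inverting $\Delta$ tells you nothing about the local structure there. Generic reducedness along $\I_3$ is the content the paper supplies via the \emph{Macaulay2} square-free factorization of the local generator $\gamma$, and your proposal does not replace that computation with an argument. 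The same issue persists in your suggested $R_0+S_1$ variant: $S_1$ is indeed free because $\pr_3$ is a complete intersection and $R_0$ at $\mathfrak Q_3$ follows from the $\Delta$-localization, but $R_0$ at $\I_3$ is \emph{not} ``essentially the same colon-ideal computation'' -- it is an independent transversality statement that still needs to be established.

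Once that single fact ($\pr_3$ generically reduced along $\I_3$, equivalently $\I_3\notin\operatorname{Ass}(J)$) is supplied by some means, the rest of your scaffold does close: $J$ is then $\mathfrak Q_3$-primary and unmixed, the localization at $\mathfrak Q_3$ identifies $J=\mathfrak Q_3$, and combining $\pr_3:\I_3=\mathfrak Q_3$ with $\pr_3:\mathfrak Q_3=\I_3$ forces both primary components of the unmixed ideal $\pr_3$ to equal their radicals.
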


\begin{proof} 
Theorem \ref{thm:n-1} implies $\pr_3$ is unmixed, so it suffices to show its primary decomposition is exactly $\I_3\cap \mathfrak Q_3$.  Also, we have the property $\pr_3$ is reduced if and only if its image at any localization is also reduced.  Let $\mu_{\bar i}\in\pr_3$ denote the principal minor obtained by omitting the $i$th row and column of $X$.  By inverting elements of $S=K[X]$, we first solve for variables using the equations $\mu_{\bar i}=0$.  Then we check the image of $\pr_3$ is reduced.  

Invert the minor $\delta_1=x_{11}x_{22}-x_{12}x_{21}$, which is not in $\I_3$ because $\I_3$ is generated by degree three polynomials, and which is not in $\mathfrak Q_3$ by Lemma \ref{lem:Qminors}.  Put $S_{\delta_1}=S[\frac{1}{\delta_1}]$, where $S=K[X]$.  We use $\mu_{\bar 4}=0$ to solve for $x_{33}$; let $F=\left.\mu_{\bar 4}\right|_{x_{33}=0}$, i.e., the determinant $\mu_{\bar 4}$, evaluated at $x_{33}=0$.  Then $F\equiv-x_{33}\delta_1\mod{\mu_{\bar 4}}$, and 
\[
\frac{S_{\delta_1}}{\pr_3S_{\delta_1}}\cong\frac{K[\frac{1}{\delta_1},\left.X\right|_{x_{33}=-\frac{F}{\delta_1}}]}{\left(\mu_{\bar 1},\mu_{\bar 2},\mu_{\bar 3} \mid x_{33}=-\frac{F}{\delta_1}\right)}.
\]
Similarly, put $G=\left.\mu_{\bar 3}\right|_{x_{44}=0}$.  Since $G$ is an expression independent of $F$, we have
\[
\frac{S_{\delta_1}}{\pr_3S_{\delta_1}}\cong\frac{K[\frac{1}{\delta_1},\left.X\right|_{x_{33}=-\frac{F}{\delta_1},x_{44}=-\frac{G}{\delta_1}}]}
{\left(\underbrace{ 
\begin{vmatrix}
x_{22} & x_{23} & x_{24} \\
	x_{32} & -\frac{F}{\delta_1} & x_{34} \\
	x_{42} & x_{43} & -\frac{G}{\delta_1}
	\end{vmatrix}}_{\left.\mu_{\bar 1}\right|_{x_{33}=-\frac{F}{\delta_1},x_{44}=-\frac{G}{\delta_1}}} , 
\underbrace{
\begin{vmatrix}
x_{11} & x_{13} & x_{14} \\
	x_{31} & -\frac{F}{\delta_1} & x_{34} \\
	x_{41} & x_{43} & -\frac{G}{\delta_1}\end{vmatrix}}_{\left.\mu_{\bar 2}\right|_{x_{33}=-\frac{F}{\delta_1},x_{44}=-\frac{G}{\delta_1}}} 
	\right)}.
	\]

We now solve for a variable not appearing in either polynomial
\[
\begin{split}F\equiv& x_{31}(x_{12}x_{23}-x_{13}x_{22})-x_{32}(x_{11}x_{23}-x_{13}x_{21})\mod{\mu_{\bar 4}} \\
G\equiv& x_{41}(x_{12}x_{24}-x_{14}x_{22})-x_{42}(x_{11}x_{24}-x_{14}x_{21})\mod{\mu_{\bar 3}},\end{split}\] 
using $\mu_{\bar 2}$.  Invert $\delta_2=x_{11}x_{34}-x_{14}x_{31}$, which, again, is not in $\I_3$ nor $\mathfrak Q_3$.  Then define $H=\left.\mu_{\bar 2}\right|_{x_{43}=0}$.  The image of $\pr_3$ is now principal:
\[\pr_3S_{\delta_1,\delta_2}\cong\left(
\begin{vmatrix}
x_{22} & x_{23} & x_{24} \\
	x_{32} & -\frac{F}{\delta_1} & x_{34} \\
	x_{42} & \frac{H}{\delta_2} & -\frac{G}{\delta_1}
	\end{vmatrix}
	\right)\cdot K\left[\frac{1}{\delta_1},\frac{1}{\delta_2},\left.X\right|_{x_{33}=-\frac{F}{\delta_1},x_{44}=-\frac{G}{\delta_1},x_{43}=\frac{H}{\delta_2}}\right]
	\]
Let $\gamma$ denote the generator of the image of $\pr_3S_{\delta_1,\delta_2}$, upon clearing denominators.  

The localized polynomial ring $S_{\delta_1,\delta_2}$ is a unique factorization domain, so it suffices to prove the irreducible factors of $\gamma\in S$ with non-zero image in $\I_3(S/\pr_3)$ or $\mathfrak Q_3(S/\pr_3)$ are square-free.  We factor $\gamma$ in the polynomial ring $S$, using \emph{Macaulay2}, where we put $K=\Z$ (which will imply the factorization is valid in all equicharacteristics):
\[
\gamma=\det X(\underbar i;\underbar j)f',
\]
where $\underbar i=\{1,2,3\}, \underbar j=\{1,2,4\}$, 
\[
f'=x_{31}(x_{12}x_{23}-x_{13}x_{22})Y_3+x_{14}(x_{21}x_{42}-x_{22}x_{41})\mu_{\bar 4}+\delta_1f,
\]
and $f$ is a degree 4 irreducible polynomial (see Equation \eqref{eq:f}). Modulo the ideal $\pr_3$, we get 
\[
\gamma\equiv\det X(\underbar i;\underbar j)\delta_1f\mod{\pr_3},
\]
which is square-free, as desired. 
\end{proof}

The polynomial $f$ in the proof of Theorem \ref{thm:red} is 
\begin{multline}\label{eq:f}
f=-x_{14}x_{21}x_{33}x_{42}+x_{11}x_{23}x_{34}x_{42}+x_{14}x_{22}x_{31}x_{43} \\
-x_{11}x_{22}x_{34}x_{43}-x_{12}x_{23}x_{31}x_{44}+x_{12}x_{21}x_{33}x_{44}.
\end{multline}

The fact that $\pr_3$ is a reduced complete intersection implies its minimal primes, $\I_3$ and $\mathfrak Q_3$, are linked; recall, two ideals $I,J$ in a Cohen-Macaulay ring $R$ are \emph{linked} (or \emph{algebraically linked}) means there exists a regular sequence $\vect f=f_1,\dots,f_h$ in $I\cap J$ such that $J=(\vect f):_RI$ and $I=(\vect f):_RJ$.  

\begin{rmk} 
Theorem \ref{thm:red} follows from the fact that $\I_3$ and $\mathfrak Q_3$ are \emph{geometrically linked}. 
\end{rmk}

For the remainder of this paper, we use statements from Proposition 2.5 and Remark 2.7 in \cite{huneke+ulrich/87} to pull some corollaries from Theorem \ref{thm:red}.  In our context, all results for local rings also hold for \emph{graded local} rings (see Chapter 1.5 of \cite{bruns+herzog} for justification for that statement). 

\begin{prop*}[\cite{peskine+szpiro}, 2.5 of \cite{huneke+ulrich/87}]  
Let $I$ be an unmixed ideal of height $h$ in a (not necessarily local) Gorenstein ring $R$, and let $\vect f=f_1,\dots,f_h$ be a regular sequence inside $I$ with $(\vect f)\neq I$, and set $J=(\vect f):I$.  
\begin{enumerate}[(a)]
\item $I=(\vect f):J$ (i.e., $I$ and $J$ are linked).
\item $R/I$ is Cohen-Macaulay if and only if $R/J$ is Cohen-Macaulay.
\item Let $R$ be local and let $R/I$ be Cohen-Macaulay.  Then $\omega_{R/J}\cong I/(\vect f)$ and $\omega_{R/I}\cong J/(\vect f)$, where $\omega_{R'}$ denotes the canonical modules for a Cohen-Macaulay ring $R'$.
\end{enumerate}
\end{prop*}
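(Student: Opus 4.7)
The plan is to reduce modulo the regular sequence $\vect f$ and then invoke Gorenstein duality. Set $\bar R = R/(\vect f)$, $\bar I = I/(\vect f)$, and $\bar J = J/(\vect f)$. Since $\vect f$ is a regular sequence in a Gorenstein ring, $\bar R$ is again Gorenstein, of dimension $\dim R - h$. The defining condition $J = (\vect f):I$ translates to $\bar J = \mathrm{Hom}_{\bar R}(\bar R/\bar I,\bar R)$, because $r\in J$ if and only if $\bar r$ annihilates $\bar I$ in $\bar R$. Because $I$ is unmixed of height $h$, every associated prime of $\bar I$ is a minimal prime of $\bar R$, so $\bar I$ has grade zero and no embedded primes.

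For part (a), the target is the double-annihilator identity $\mathrm{ann}_{\bar R}\mathrm{ann}_{\bar R}(\bar I) = \bar I$, which rephrases $I = (\vect f):J$. In a zero-dimensional Gorenstein local ring this is a standard consequence of Matlis duality, since such a ring is self-injective. In the general case I would localize at each associated prime $\mathfrak p$ of $\bar I$, where $\bar R_{\mathfrak p}$ is zero-dimensional Gorenstein local and the identity applies; because $\bar I$ is unmixed with no embedded primes, these local identities patch to the global equality.

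For parts (b) and (c) I would invoke the change-of-rings (Rees) isomorphism $\mathrm{Ext}^{i+h}_R(M, R) \cong \mathrm{Ext}^i_{\bar R}(M,\bar R)$, which is valid for any $\bar R$-module $M$ because $\vect f$ is $R$-regular. Applied to $M = \bar R/\bar I$ this gives $\mathrm{Ext}^h_R(R/I, R) = \mathrm{Hom}_{\bar R}(\bar R/\bar I,\bar R) = J/(\vect f)$. Local duality in the Gorenstein ring $R$ then says $R/I$ is Cohen-Macaulay if and only if $\mathrm{Ext}^i_R(R/I, R) = 0$ for every $i > h$, equivalently $\mathrm{Ext}^i_{\bar R}(\bar R/\bar I,\bar R) = 0$ for every $i > 0$. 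The same characterization applies to $R/J$ after swapping the roles of $I$ and $J$ via part (a), yielding (b). For (c), once $R/I$ is Cohen-Macaulay, the local duality identification $\omega_{R/I} = \mathrm{Ext}^h_R(R/I,\omega_R) = \mathrm{Ext}^h_R(R/I, R) = J/(\vect f)$ is immediate, and the symmetric argument gives $\omega_{R/J} = I/(\vect f)$.

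The principal obstacle is the double-annihilator step in (a), which simultaneously needs the Gorenstein hypothesis (so that $\mathrm{Hom}(-,\bar R)$ is involutive on the appropriate category) and the unmixedness of $I$ (so that no embedded primes are lost under double dualization). The cleanest resolution is to verify the identity after localizing at each associated prime of $\bar I$, where one has an Artinian Gorenstein local ring and Matlis duality applies directly, and then to assemble these local identities globally using the absence of embedded primes.
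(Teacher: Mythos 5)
The paper does not prove this proposition; it only cites it (to Peskine--Szpiro and to Proposition~2.5 of Huneke--Ulrich) and uses it to derive Corollaries~\ref{cor:QCM}--\ref{cor:definingQ}. So there is no internal proof to compare against, and the only question is whether your blind argument is correct.

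Your reduction to $\bar R = R/(\vect f)$, the translation $\bar J = \operatorname{ann}_{\bar R}(\bar I) = \operatorname{Hom}_{\bar R}(\bar R/\bar I,\bar R)$, and the Rees change-of-rings isomorphism are all correctly deployed, and your proof of part (a) is sound: since $I$ is unmixed of height $h$ and $(\vect f)\subseteq I$ has the same height, every associated prime of $\bar I$ is a minimal prime of $\bar R$, so the quotient $\bigl(\operatorname{ann}\operatorname{ann}\bar I\bigr)/\bar I$, if nonzero, would have an associated prime $\mathfrak p$ minimal in $\bar R$; localizing at $\mathfrak p$ lands in an Artinian Gorenstein local ring, where the double-annihilator identity holds, giving the contradiction. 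Part (c) also follows from (a) and (b) exactly as you say, via $\omega_{R/I}\cong\operatorname{Ext}^h_R(R/I,R)\cong\operatorname{Hom}_{\bar R}(\bar R/\bar I,\bar R)=\bar J$ and symmetry.

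The genuine gap is in part (b). You write that $R/I$ is Cohen--Macaulay iff $\operatorname{Ext}^i_{\bar R}(\bar R/\bar I,\bar R)=0$ for all $i>0$, that the analogous criterion holds for $\bar R/\bar J$, and that these are exchanged by the symmetry in (a). But (a) only identifies the modules $\bar I$ and $\bar J$ as each other's annihilators; it does not by itself make the two vanishing conditions equivalent, which is precisely the content of (b). The missing step is the duality argument: since $\bar R/\bar I=R/I$ is maximal Cohen--Macaulay over $\bar R$ and $\bar R$ is Gorenstein, its dual $\bar J=\operatorname{Hom}_{\bar R}(\bar R/\bar I,\bar R)$ is again maximal Cohen--Macaulay (and $\bar R/\bar I$ is reflexive). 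Feeding this into the long exact sequence of $\operatorname{Ext}^\bullet(-,\bar R)$ applied to $0\to\bar J\to\bar R\to\bar R/\bar J\to 0$ gives $\operatorname{Ext}^i_{\bar R}(\bar R/\bar J,\bar R)\cong\operatorname{Ext}^{i-1}_{\bar R}(\bar J,\bar R)=0$ for $i\geq 2$, while for $i=1$ the connecting map is the surjection $\bar R\twoheadrightarrow\operatorname{Hom}_{\bar R}(\bar J,\bar R)\cong\bar R/\bar I$, so $\operatorname{Ext}^1$ vanishes too. Without this computation, the implication ``$R/I$ CM $\Rightarrow$ $R/J$ CM'' is asserted rather than proved.
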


\begin{cor}\label{cor:QCM}
$\mathfrak Q_3$ is Cohen-Macaulay. 
\end{cor}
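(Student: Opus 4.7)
The plan is to deduce Cohen-Macaulayness of $S/\mathfrak Q_3$, where $S = K[X]$, from that of $S/\I_3$ by invoking the cited Peskine-Szpiro/Huneke-Ulrich linkage proposition with $I = \I_3$ and $J = \mathfrak Q_3$. The overall pipeline has three steps: verify the hypotheses of the proposition; identify the colon ideal $\pr_3 :_S \I_3$ with $\mathfrak Q_3$ to establish linkage; then apply part (b) together with the classical Hochster-Eagon Cohen-Macaulayness of the determinantal ring $S/\I_3$.

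First, I would check the hypotheses. The ring $S$ is a polynomial ring, hence Gorenstein. Theorem \ref{thm:n-1} shows the minimal primes of $\pr_3$ are $\I_3$ and $\mathfrak Q_3$, both of height $4$; hence $\pr_3$ is unmixed of pure height $4$. Since $\pr_3$ has exactly $n = 4$ generators $\mu_{\bar 1}, \mu_{\bar 2}, \mu_{\bar 3}, \mu_{\bar 4}$ in the Cohen-Macaulay ring $S$ and achieves pure height $4$, these generators form a regular sequence $\vect f$, giving $\pr_3 = (\vect f) \subsetneq \I_3$ as a complete intersection (the strict inclusion because $\I_3$ contains non-principal $3$-minors).

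Second, I would show $\pr_3 :_S \I_3 = \mathfrak Q_3$, which establishes the linkage. The containment $\mathfrak Q_3 \subseteq \pr_3 :_S \I_3$ is immediate from $\mathfrak Q_3 \cdot \I_3 \subseteq \mathfrak Q_3 \cap \I_3 = \pr_3$, using the intersection equality from Theorem \ref{thm:red}. For the reverse direction, if $x \in \pr_3 :_S \I_3$, then $x \I_3 \subseteq \pr_3 \subseteq \mathfrak Q_3$, and since $\mathfrak Q_3$ is prime while $\I_3 \not\subseteq \mathfrak Q_3$ (by the incomparability established inside the proof of Theorem \ref{thm:n-1}), we obtain $x \in \mathfrak Q_3$.

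Third, $S/\I_3$ is Cohen-Macaulay by the classical Hochster-Eagon theorem on generic determinantal ideals, so part (b) of the cited proposition yields that $S/\mathfrak Q_3$ is Cohen-Macaulay, as desired. Once Theorem \ref{thm:red} is available, every step is routine; the only point that deserves any care is spelling out the colon computation, but that reduces immediately to the primality of $\mathfrak Q_3$ and the incomparability of $\I_3$ and $\mathfrak Q_3$, so there is no real obstacle.
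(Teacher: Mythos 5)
Your proof is correct and takes essentially the same route as the paper: establish linkage of $\I_3$ and $\mathfrak Q_3$ from the fact that $\pr_3$ is a reduced complete intersection (Theorem \ref{thm:red} plus the height count), then transfer Cohen--Macaulayness from the determinantal ring $S/\I_3$ via part (b) of the Peskine--Szpiro/Huneke--Ulrich proposition. You simply unpack the step the paper states in one line --- that a reduced complete intersection links its two minimal primes --- by carrying out the colon computation $\pr_3 :_S \I_3 = \mathfrak Q_3$ explicitly; one very small quibble is that "minimal primes both have height $4$" gives $\hgt\pr_3 = 4$ and hence, with four generators in a CM ring, a complete intersection, and unmixedness then follows from Macaulay rather than being immediate from the heights of the minimal primes alone, but this reordering does not affect the argument.
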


\begin{proof} 
For any $n$, determinantal ideals are known to be Cohen-Macaulay.  Linkage implies $\mathfrak Q_3$ must also be Cohen-Macaulay. 
\end{proof}

For the next corollary, we introduce an $\N^{2n}$-multigrading on $K[X]$: a polynomial has degree $(r_1,\dots,r_n;c_1,\dots,c_n)$ means under the standard grading, its degree in the variables from the $i$th row (resp., $j$th column) of $X$ is $r_i$ (resp. $c_j$), for all $i=1,\dots,n$ (resp., $j=1,\dots,n$).  Alternatively, $\deg x_{ij}=(\vect e_i;\vect e_j)$, the entries from the standard basis vectors for $K^n$.  We freely use that the sum, product, intersection, and colon of multigraded ideals is multigraded, as well as all associated primes over a multigraded ideal.  Observe how in the standard grading, the degree of any polynomial in the standard grading equals the sums $r_1+\cdots+r_n=c_1+\cdots+c_n$.  

\begin{cor}\label{cor:fQ} 
$\mathfrak Q_3=\pr_3+(f)$, where $f$ is as in Equation \eqref{eq:f}. 
\end{cor}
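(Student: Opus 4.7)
My plan is to combine the linkage established in Theorem~\ref{thm:red} with the fact that the generic determinantal ring $S/\I_3$ (where $S=K[X]$) is Gorenstein, a classical consequence of the ambient matrix being square. The Peskine--Szpiro proposition quoted above then yields $\omega_{S/\I_3}\cong \mathfrak{Q}_3/\pr_3$ as $S/\I_3$-modules, and Gorenstein-ness makes this canonical module rank-one free; hence $\mathfrak{Q}_3/\pr_3$ is a cyclic $S/\I_3$-module, and the task reduces to exhibiting $f$ as the generator.

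I first verify $f\in \mathfrak{Q}_3$. The proof of Theorem~\ref{thm:red} produced the factorization $\gamma = \det X(\underbar i;\underbar j)\cdot f'$ with $f'\equiv \delta_1 f\pmod{\pr_3}$. Because $\delta_1$ and $\delta_2$ avoid both minimal primes of $\pr_3$ (they are $2$-minors, hence outside $\I_3$, and outside $\mathfrak{Q}_3$ by Lemma~\ref{lem:Qminors}), they are nonzerodivisors on $S/\pr_3$, so $\pr_3$ is saturated with respect to $\delta_1\delta_2$ and $\gamma\in \pr_3$. Rearranging produces $\det X(\underbar i;\underbar j)\,\delta_1 f \in \pr_3\subseteq \mathfrak{Q}_3$; since $\mathfrak{Q}_3$ is prime and avoids $\delta_1$ and the non-principal $3$-minor $\det X(\underbar i;\underbar j)$ (the latter checked by direct evaluation at a fixed-point-free permutation matrix, complementing Lemma~\ref{lem:Qminors}), one concludes $f\in \mathfrak{Q}_3$.

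To show $f$ generates $\mathfrak{Q}_3/\pr_3$, I next verify $f\notin \pr_3$ by evaluating at the rank-two matrix
\[
B = \begin{pmatrix} 1 & 1 & 0 & 0 \\ 0 & 0 & 1 & 1 \\ 1 & 1 & 0 & 0 \\ 0 & 0 & 1 & 1 \end{pmatrix},
\]
at which every $3$-minor of $X$ vanishes while $f(B) = -1$; thus $f\notin \I_3$, and hence $f\notin \I_3\cap \mathfrak{Q}_3=\pr_3$. Combined with $f\in \mathfrak{Q}_3$ and primality of $\I_3$, this gives $\pr_3:_S f = \I_3$, so that $(\pr_3 + (f))/\pr_3 \cong (S/\I_3)(-\vect d)$ as multigraded $S/\I_3$-modules, where $\vect d = (1,1,1,1;1,1,1,1)$ is the multidegree of $f$. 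The natural inclusion into $\mathfrak{Q}_3/\pr_3\cong \omega_{S/\I_3}\cong (S/\I_3)(-\vect a)$ is then a morphism of rank-one free $S/\I_3$-modules given by multiplication by a multihomogeneous element of multidegree $\vect d-\vect a$, where $\vect a$ is the canonical multidegree of $\omega_{S/\I_3}$. This inclusion is an isomorphism---yielding the desired $\pr_3 + (f) = \mathfrak{Q}_3$---precisely when $\vect a = \vect d$.

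The main obstacle is pinning down $\vect a = (1,1,1,1;1,1,1,1)$. Row- and column-permutation actions together with the transposition symmetry all preserve $\I_3$, $\pr_3$, and $\mathfrak{Q}_3$; this forces $\vect a = (a,a,a,a;a,a,a,a)$ for some integer $a$. Existence of the nonzero class of $f$ in $\mathfrak{Q}_3/\pr_3$ in multidegree $\vect d$ shows $\vect a\leq \vect d$, i.e.\ $a\leq 1$. Conversely, $(\mathfrak{Q}_3/\pr_3)_{\vect 0} = 0$ (since $\mathfrak{Q}_3$ is a proper ideal, containing no nonzero constants), while $\omega_{S/\I_3}$ in multidegree $\vect 0$ is $(S/\I_3)_{-\vect a}$, which is nonzero whenever $a\leq 0$ (monomials like $(x_{11}x_{22}x_{33}x_{44})^{-a}$ take the value $1$ at a rank-two matrix with all diagonal entries equal to $1$, hence have nonzero image in $S/\I_3$). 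Hence $a = 1$, completing the verification.
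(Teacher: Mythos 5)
Your proof is correct, and it takes a genuinely different, arguably cleaner route to the crucial step. Both proofs open the same way: Gorenstein-ness of $K[X]/\I_3$ plus linkage makes $\mathfrak Q_3/\pr_3$ a cyclic, rank-one free $K[X]/\I_3$-module, so one only needs to identify $f$ as its generator. The paper then argues by contradiction in the standard grading: it assumes a generator $g$ of standard degree strictly less than $4$, deduces that some row- or column-degree of $g$ must vanish, and derives a contradiction by multiplying $g$ against carefully chosen $3$-minors and tracking multidegrees. This is quick but a bit ad hoc and terse. You instead pin down the full multidegree of the generator of $\mathfrak Q_3/\pr_3 \cong (K[X]/\I_3)(-\vect a)$: the diagonal $S_4$ action plus transposition symmetry forces $\vect a=(a,\ldots,a;a,\ldots,a)$; the nonvanishing of $f$ in multidegree $(1,\ldots,1;1,\ldots,1)$ gives $a\le 1$; and $(\mathfrak Q_3/\pr_3)_{\vect 0}=0$ (since $\mathfrak Q_3$ is a proper ideal) gives $a\ge 1$, since otherwise monomials like $(x_{11}x_{22}x_{33}x_{44})^{-a}$, nonzero in $K[X]/\I_3$, would live in the wrong degree. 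That determines $\vect a$ exactly, without the case analysis. You also supply independent verifications of $f\in\mathfrak Q_3$ and $f\notin\pr_3$ (via the rank-two evaluation point and the fact that the non-principal $3$-minor $\det X(\underline i;\underline j)$ and the $2$-minor $\delta_1$ lie outside the prime $\mathfrak Q_3$), whereas the paper simply cites the proof of Theorem~\ref{thm:red} for these facts. The paper's argument is shorter because it reuses Theorem~\ref{thm:red}; yours is more self-contained and makes the symmetry of the multigrading do the work. One small imprecision: you phrase the linkage isomorphism as $\mathfrak Q_3/\pr_3 \cong \omega_{K[X]/\I_3}$ with no graded shift, which the Peskine--Szpiro statement as quoted does not literally guarantee; but your argument never actually uses the shift of $\omega$, only the fact that $\mathfrak Q_3/\pr_3$ is a rank-one free module, so the computation of $\vect a$ stands regardless.
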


\begin{proof} 
Since $\I_3$ is Gorenstein (\cite{svanes}), linkage implies the canonical module, 
\[
\omega_{K[X]/\I_3}\cong\mathfrak Q_3/\pr_3,
\]
 is cyclic.  The proof of Theorem \ref{thm:red} shows the image of $f$ in $\mathfrak Q_3/\pr_3$ is non-zero.  It remains to show $f$ actually generates $\omega_{K[X]/\I_3}$.

We saw $f$ has degree 4 in the standard grading, so we show no polynomial of degree strictly less than 4 can generate $\mathfrak Q_3/\pr_3$.  Assume $g\in\mathfrak Q_3$ is such a polynomial.  Because $\pr_3:_{K[X]}\I_3=\mathfrak Q_3$, $g$ must multiply every generator $X_{ij}\in\I_3$ into $\pr_3$.  Suppose $g$ has degree 0 in the $i$th row.  The product of $g$ with any 3-minor not involving the $i$th row must then be a multiple of $X_{ii}\in\pr_3$.  If $g$ also has degree 0 in the $i$th column then either $g\in\pr_3$, a contradiction of the choice of $g$, or there exists another column $j\neq i$ where $g$ has degree 0 as well.  Then the product $gX_{ij}$ must simultaneously divide $X_{ii}$ and $X_{jj}$.  But this cannot happen, because the product has degree 0 in both the $i$th row and the $j$th column. 
\end{proof}

\begin{rmk} 
Corollary \ref{cor:fQ} also follows from Remark 2.7 in \cite{huneke+ulrich/87}. 
\end{rmk}

Recall, for general $n$, how we defined $\mathfrak Q_{n-1}$ as the contraction of $\ker\Phi$ to $K[X]$, where $\Phi$ is the map from Corollary \ref{cor:Qn-1}.  By definition, 
\[
\mathfrak Q_{n-1}=\pr_{n-1}:_{K[X]}\Delta^{\infty},
\]
where $\Delta=\det X$.

\begin{cor}\label{cor:definingQ} 
$\mathfrak Q_3=\pr_3:_{K[X]}\Delta$. 
\end{cor}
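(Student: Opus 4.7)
The plan is to deduce the corollary as a short consequence of Theorem \ref{thm:red}, together with the elementary observation that $\Delta=\det X$ already lies in $\I_3$.

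The containment $\pr_3:_{K[X]}\Delta\subseteq\pr_3:_{K[X]}\Delta^{\infty}=\mathfrak Q_3$ is immediate from the defining description of $\mathfrak Q_3$ recalled in the paragraph just above the corollary, since in any commutative ring the colon with a single element is contained in the colon with all its powers. So the real content is the reverse inclusion $\mathfrak Q_3\subseteq\pr_3:_{K[X]}\Delta$, which is equivalent to the assertion that $\Delta\cdot\mathfrak Q_3\subseteq\pr_3$.

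To establish $\Delta\cdot\mathfrak Q_3\subseteq\pr_3$, I would invoke Theorem \ref{thm:red} to rewrite $\pr_3=\I_3\cap\mathfrak Q_3$ and then verify the two containments $\Delta\cdot\mathfrak Q_3\subseteq\mathfrak Q_3$ and $\Delta\cdot\mathfrak Q_3\subseteq\I_3$ separately. The first is automatic, because $\mathfrak Q_3$ is an ideal. For the second, Laplace expansion of the $4\times 4$ determinant along any fixed row (or column) of $X$ writes
\[
\Delta=\sum_{j=1}^{4}(-1)^{i+j}x_{ij}\,X_{ij},
\]
where each $X_{ij}$ is a size $3$ minor of $X$ (up to sign, the $(i,j)$th cofactor); hence $\Delta\in\I_3$, and therefore $\Delta\cdot\mathfrak Q_3\subseteq\I_3$.

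I do not anticipate a real obstacle here: once Theorem \ref{thm:red} is in hand, the proof reduces to the trivial membership $\Delta\in\I_3$ via Laplace expansion, so the whole argument should be only a few lines. The only subtlety to flag in the write-up is that this use of $\pr_3=\I_3\cap\mathfrak Q_3$ is precisely where characteristic-free reducedness (rather than the \emph{Macaulay2} computation in a single characteristic) is needed; absent Theorem \ref{thm:red}, one would only be able to conclude the equality after inverting something, not as ideals in $K[X]$.
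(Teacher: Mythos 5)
Your proof is correct, and it takes a genuinely different and cleaner route than the paper's. The paper proves the nontrivial containment $\mathfrak Q_3\subseteq\pr_3:_{K[X]}\Delta$ by first invoking Corollary~\ref{cor:fQ} to reduce to the single generator $f$ and then verifying $f\Delta\in\pr_3$ by direct (\emph{Macaulay2}) computation. You instead bypass Corollary~\ref{cor:fQ} entirely: from Theorem~\ref{thm:red} you have $\pr_3=\I_3\cap\mathfrak Q_3$, and then $\Delta\cdot\mathfrak Q_3\subseteq\mathfrak Q_3$ is automatic while $\Delta\cdot\mathfrak Q_3\subseteq\I_3$ follows from the elementary fact $\Delta\in\I_3$ via Laplace expansion. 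Your argument is computation-free, needs no information about the generators of $\mathfrak Q_3$, and does not even use the Gorenstein/linkage machinery behind Corollary~\ref{cor:fQ}. It would also generalize: whenever one knows $\pr_{n-1}$ is reduced (Conjecture~\ref{conj:reduced}), the same two-line argument gives $\mathfrak Q_{n-1}=\pr_{n-1}:_{K[X]}\Delta$, since every minimal prime of $\pr_{n-1}$ other than $\mathfrak Q_{n-1}$ cuts out a locus of singular matrices and hence contains $\Delta$. The only thing the paper's route buys is that it is parasitic on a computation already carried out (the factorization of $\gamma$ in the proof of Theorem~\ref{thm:red}), but as a freestanding proof yours is preferable.
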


\begin{proof} 
From Corollary \ref{cor:fQ}, the only generator for $\mathfrak Q_3$ outside of $\pr_3$ is $f$, and direct computation shows $f\Delta\in\pr_3$. 
\end{proof}

\begin{conj}\label{conj:reduced} 
For all $n$, $\pr_{n-1}$ is reduced. 
\end{conj}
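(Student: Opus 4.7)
The plan is to prove $\pr_{n-1} = \I_{n-1} \cap \mathfrak Q_{n-1}$, which by Theorem \ref{thm:n-1} is equivalent to reducedness, since any ideal equal to the intersection of its minimal primes is radical. The inclusion $\pr_{n-1} \subseteq \I_{n-1} \cap \mathfrak Q_{n-1}$ is automatic, so only the reverse inclusion requires work. The structural obstacle absent from the $n = 4$ case of Theorem \ref{thm:red} is that for $n > 4$ the minimal primes $\I_{n-1}$ and $\mathfrak Q_{n-1}$ have different heights ($4$ and $n$), so $\pr_{n-1}$ is no longer unmixed and the linkage machinery cannot be invoked.

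Since reducedness is a local property, I would first verify $(\pr_{n-1})_P = (\I_{n-1} \cap \mathfrak Q_{n-1})_P$ at each minimal prime $P$ and then rule out embedded primes. The localization at $\mathfrak Q_{n-1}$ is immediate: since $\Delta \notin \mathfrak Q_{n-1}$ and $\pr_{n-1} S_\Delta = \mathfrak Q_{n-1} S_\Delta$ by Theorem \ref{thm:invBij}, we get $(\pr_{n-1})_{\mathfrak Q_{n-1}} = \mathfrak Q_{n-1} S_{\mathfrak Q_{n-1}}$. The localization at $\I_{n-1}$ uses Nakayama's lemma: the smooth locus of $\Var(\I_{n-1})$ equals the rank-exactly-$(n-2)$ stratum, so $S_{\I_{n-1}}$ is a regular local ring of dimension $4$, and the equality $(\pr_{n-1})_{\I_{n-1}} = \I_{n-1} S_{\I_{n-1}}$ reduces to the linear algebra statement that the differentials $d\mu_{\bar 1}, \dots, d\mu_{\bar n}$ span the $4$-dimensional cotangent space $\I_{n-1}/\I_{n-1}^2 \otimes K(\I_{n-1})$. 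This Jacobian calculation can be carried out at a convenient representative such as the block matrix $\bigl(\begin{smallmatrix} I_{n-2} & 0 \\ 0 & 0 \end{smallmatrix}\bigr)$.

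The main obstacle is ruling out embedded primes, particularly primes $P \supsetneq \I_{n-1}$ with $P \in \mathrm{Ass}(S/\pr_{n-1})$: since $\pr_{n-1}$ fails to be Cohen-Macaulay for $n > 4$, no standard CM argument applies, and Serre's $(S_1)$ condition must be verified directly. A promising route is to first generalize Corollary \ref{cor:definingQ}, showing $\mathfrak Q_{n-1} = \pr_{n-1} :_{K[X]} \Delta$, which combined with $\Delta \in \I_{n-1}$ constrains the structure of any embedded component. A more ambitious alternative would be a flat degeneration of $\pr_{n-1}$ to a squarefree monomial initial ideal, from which reducedness would descend in one stroke; however, no such degeneration is apparent from small-$n$ experiments, so new structural ideas are needed.
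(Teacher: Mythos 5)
The statement you were asked to prove is Conjecture~\ref{conj:reduced}, which the paper does \emph{not} prove; it is listed as open, and Section 5 explicitly names the obstacle: ``for $n>4$, the ideals $\pr_{n-1}$ are no longer unmixed.'' So there is no paper proof to compare against, and you correctly identify the same central difficulty (ruling out embedded primes of $\pr_{n-1}$ once Cohen--Macaulayness is lost). Your reduction of reducedness to $\pr_{n-1}=\I_{n-1}\cap\mathfrak Q_{n-1}$ via Theorem~\ref{thm:n-1}, and your observation that $(\pr_{n-1})_{\mathfrak Q_{n-1}}=(\mathfrak Q_{n-1})_{\mathfrak Q_{n-1}}$ is immediate from the definition of $\mathfrak Q_{n-1}$, are both correct.

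However, the one step you present as routine contains a concrete error. You claim the Jacobian computation establishing $(\pr_{n-1})_{\I_{n-1}}=\I_{n-1}S_{\I_{n-1}}$ ``can be carried out at a convenient representative such as $\bigl(\begin{smallmatrix}I_{n-2}&0\\0&0\end{smallmatrix}\bigr)$.'' At that matrix the Jacobian of $(\mu_{\bar 1},\dots,\mu_{\bar n})$ has rank only $2$, not $4$: each $\partial\mu_{\bar i}/\partial x_{jk}$ is a signed $(n-2)$-minor of the $(n-1)\times(n-1)$ submatrix $X(\bar i;\bar i)$, and for $i\leq n-2$ that submatrix has rank $n-3$ at your representative (so all its $(n-2)$-minors vanish), while for $i\in\{n-1,n\}$ the diagonal shape kills every cofactor except a single one ($\partial\mu_{\bar n}/\partial x_{n-1,n-1}$ and $\partial\mu_{\overline{n-1}}/\partial x_{nn}$, respectively). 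So this point lies in the locus where the Jacobian rank drops below its generic value, and the Nakayama argument as written would wrongly conclude $(\pr_{n-1})_{\I_{n-1}}\subsetneq\I_{n-1}S_{\I_{n-1}}$. You would need a less degenerate rank-$(n-2)$ representative (for instance one with all $(n-2)$-minors nonzero) or a direct argument at the generic point. The embedded-primes step, which you flag as the ``main obstacle'' with only speculative routes (generalizing Corollary~\ref{cor:definingQ}, or a squarefree monomial degeneration), is indeed where the real difficulty lies and where the proposal, as you acknowledge, does not close the gap.
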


\section{Further Work}
Several difficulties arise in proving Conjecture \ref{conj:reduced}, the most immediate is that for $n>4$, the ideals $\pr_{n-1}$ are no longer unmixed.  Another difficulty is that for larger $n$ \emph{Macaulay2} in general cannot compute generators for $\frak Q_{n-1}$.  Current work by the author is in trying to subvert these obstacles.  

In general, imposing a rank condition on matrices in $\Var(\pr_t)$ gives a factorization like the one in Equation \eqref{eq:n-1Factor}.  When the rank $r$ is equal to $t$, the vanishing of principal minors becomes a statement about algebraic subsets of the products of Grassmannians $\Grass(t,n)$.  Future work will also focus on this approach.


\bibliographystyle{amsplain}
\bibliography{../../wheelerbib}

\end{document}